\newtheorem{theorem}{Theorem}
\newtheorem{proposition}{Proposition}
\newtheorem{lemma}[proposition]{Lemma}
\theoremstyle{definition}
\theoremstyle{remark}
\newtheorem{remark}[proposition]{Remark}
\numberwithin{equation}{section}
\numberwithin{proposition}{section}
\begin{document}

\title[Bochner-Riesz operator of negative order ]{Negative Order Bochner-Riesz Operators for the Critical Magnetic Schr\"odinger Operator in $\mathbb{R}^2$}

\author{Huanqing Guo}
\address{Huanqing Guo
\newline \indent The Graduate School of China Academy of Engineering Physics, Beijing, 100088,\  China}
\email{guohuanqing23@gscaep.ac.cn}

\author{Junyong Zhang}
\address{Junyong Zhang
\newline \indent Department of Mathematics, Beijing Institute of Technology, Beijing 100081}
\email{zhang\_junyong@bit.edu.cn}

\author{Jiiqang Zheng}
\address{Jiqiang Zheng
\newline \indent Institute of Applied Physics and Computational Mathematics,
Beijing, 100088, China.
\newline\indent
National Key Laboratory of Computational Physics, Beijing 100088, China}
\email{zheng\_jiqiang@iapcm.ac.cn, zhengjiqiang@gmail.com}

\begin{abstract}
This paper studies the sharp $L^p$–$L^q$ boundedness of the Bochner–Riesz operator $S^{\delta}_{\lambda}(\mathcal{L}_{\mathbf{A}})$ associated with a scaling-critical magnetic Schr\"odinger operator $\mathcal{L}_{\mathbf{A}}$ on $\mathbb{R}^2$, where $\delta \in (-3/2, 0)$. We determine the conditions on the exponents $p$ and $q$ under which the operator is bounded from $L^p(\mathbb{R}^2)$ to $L^q(\mathbb{R}^2)$. Our main result characterizes the boundedness region as a pentagonal subset $\Delta(\delta)$ of the $(1/p, 1/q)$-plane, extending previous uniform resolvent result in Fanelli, Zhang and  Zheng[Int. Math. Res. Not., 20(2023), 17656-17703].
\end{abstract}

\maketitle

\begin{center}
 \begin{minipage}{140mm}
  { \small {{\bf Key Words:}  Bochner-Riesz operator;   critical magnetic Schrödinger operator; spectral measure; oscillatory integral theory.}
      {}
   }\\
    { \small {\bf AMS Classification:}
      {42B99, 42C10, 58C40.}
      }
 \end{minipage}
 \end{center}

\section{Introduction}
Consider a scaling-invariant magnetic Schr\"odinger operator on \(\mathbb{R}^2 \setminus \{0\}\) defined by
\begin{equation}\label{LA}
\mathcal{L}_{\mathbf{A}} = -\left( \nabla + i \frac{\mathbf{A}(\hat{x})}{|x|} \right)^2, \quad x \in \mathbb{R}^2 \setminus \{0\},
\end{equation}
where \(\hat{x} = x/|x| \in \mathbb{S}^1\) and the vector field \(\mathbf{A} \in W^{1,\infty}(\mathbb{S}^1; \mathbb{R}^2)\) satisfies the transversality condition: $	\mathbf{A}(\hat{x}) \cdot \hat{x} = 0$, for all $x \in \mathbb{R}^2$. We study the Bochner–Riesz operator of order \(\delta\) associated with \(\mathcal{L}_{\mathbf{A}}\), which is defined by
\begin{eqnarray} \label{oper:BR}
S_{\lambda}^\delta(\mathcal{L}_\mathbf{A})
:=\frac{1}{\Gamma(1+\delta)}\int_0^{\lambda} \Big(1-\frac{\rho^2}{\lambda^2}\Big)^\delta  dE_{\sqrt{\mathcal{L}_{\mathbf{A}}}} (\rho), 
\end{eqnarray}
where \(\lambda > 0\) and \(E_{\sqrt{\mathcal{L}_{\mathbf{A}}}}(\rho)\) is the spectral resolution of \(\sqrt{\mathcal{L}_{\mathbf{A}}}\). This work is part of a broader research program studying such operators, see, e.g., \cite{FZZ, FZZ1, GWZZ, GYZZ}.

The study of Bochner–Riesz operators originated in the context of the Laplacian \(\Delta = -\sum\limits_{i=1}^n \partial_{x_i}^2\) on \(\mathbb{R}^n\), \(n \geq 2\). A central conjecture asserts that for \(\delta > 0\), the Bochner–Riesz mean \(S^{\delta}_{\lambda}(-\Delta)f\) converges in \(L^p(\mathbb{R}^n)\) if and only if
\[
\delta > \delta_c(p,n) = \max\left\{0, \, n\left| \tfrac{1}{2} - \tfrac{1}{p} \right| - \tfrac{1}{2} \right\}, \quad p \in [1, \infty].
\]
This conjecture was resolved in two dimensions by Carleson and Sjölin \cite{CS} via a fundamental result on oscillatory integral operators. Alternative approaches were later given by Hörmander \cite{Hor}, Fefferman \cite{Fef2}, and Córdoba \cite{Cor}. In higher dimensions, only partial results are known; see \cite{Bour, BG, GOWWZ, Lee} and references therein.

Bochner–Riesz means for Schr\"odinger operators with potentials have also been widely studied. For example, Lee and Ryu \cite{LR} treated the case of the Hermite operator, and Jeong, Lee, and Ryu \cite{JLR} studied the twisted Laplacian in \(\mathbb{R}^2\). More recently, Miao, Yan, and Zhang \cite{MYZ} considered Bochner–Riesz means for Schr\"odinger operators with this scaling-critical magnetic potential. They showed that for \(\delta > 0\),
\[
\| S_{\lambda}^{\delta}(\mathcal{L}_{\mathbf{A}}) f \|_{L^p(\mathbb{R}^2)} \leq C \| f \|_{L^p(\mathbb{R}^2)}
\]
holds with \(C > 0\) independent of \(f\) if and only if \(\delta > \delta_c(p,2)\).

On the other hand, the $L^p$–$L^q$ boundedness of Bochner–Riesz operator of negative order has been investigated by several authors (e.g., \cite{Sogge, CS0, B, CKLS}). In the two-dimensional case, Jong-Guk Bak \cite{B} proved that for \(0 < \delta < \tfrac32\),
\[\| S_1^{-\delta}(-\Delta) f \|_{L^q(\mathbb{R}^2)} \leq C \| f \|_{L^p(\mathbb{R}^2)}\]
holds with \(C = C(p,q,\delta)\) independent of \(f\) if and only if \((1/p, 1/q) \in \Delta(\delta)\), where
\[
\Delta(\delta) = \left\{ \left( \frac{1}{p}, \frac{1}{q} \right) \in [0,1]^2 : \frac{1}{p} - \frac{1}{q} \geq \frac{2\delta}{3},\ \frac{1}{p} > \frac14 + \frac{\delta}{2},\ \frac{1}{q} < \frac34 - \frac{\delta}{2} \right\}.
\]

Geometrically, for fixed \(0 < \delta < \tfrac32\), the region \(\Delta(\delta)\) is the closed solid pentagon \(ABB'A'D\) in the unit square, excluding the closed segments \(AB\) and \(A'B'\), where the points are defined as follows:
\[
A = \left( \tfrac14 + \tfrac{\delta}{2}, 0 \right), \;
A' = \left( 1, \tfrac34 - \tfrac{\delta}{2} \right), \;
B = \left( \tfrac14 + \tfrac{\delta}{2}, \tfrac14 - \tfrac{\delta}{6} \right), \;
B' = \left( \tfrac34 + \tfrac{\delta}{6}, \tfrac34 - \tfrac{\delta}{2} \right),\; D=(1,0).
\]
Note that \(B\) lies on the line \(\tfrac1p +\tfrac3q = 1\); see Figure \ref{fig:AB}.

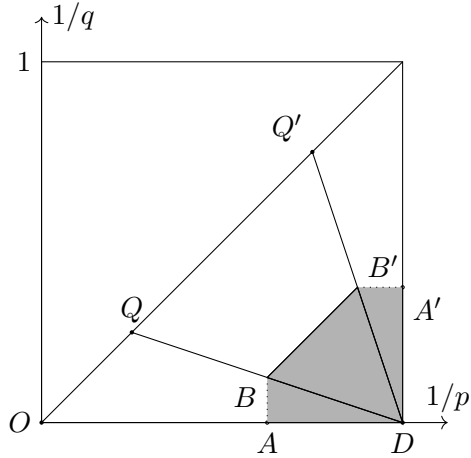
\begin{figure}[htbp] \label{fig:AB}
\begin{center}
\begin{tikzpicture}[scale=0.6]
\draw (0,0) rectangle (8,8);
\draw[->]  (0,0) -- (0,9);
\draw[->]  (0,0) -- (9,0);
				
\filldraw (0,0) circle (1pt) node[left] {$O$};  
\filldraw (8,0) circle (1pt) node[below ] {$D$};
\filldraw (6,6) circle (1pt) node[above left] {$Q'$};
\filldraw (2,2) circle (1pt) node[above ] {$Q$}; 
			
\draw (5,0) circle (1pt) node[below ] {$A$};    
\draw (8,3) circle (1pt) node[below right] {$A'$};
\draw (0,8) node[left] {$1$};
\draw (9,0) node[above] {$1/p$};
\draw (0,9) node[right] {$1/q$};
\draw (7,3) node[above right] {$B'$};
\draw (5,1) node[below left] {$B$};
			
\filldraw[fill=gray!60](5,0)--(8,0)--(5,1); 
\filldraw[fill=gray!60](7,3)--(8,0)--(8,3); 
\filldraw[fill=gray!60](8,0)--(7,3)--(5,1)--(8,0); 
			
\draw(0,0) -- (8,8);
\draw(8,0) -- (2,2);
\draw(8,0) -- (6,6);
\draw[dotted] (5,0) -- (5,1);
\draw[dotted] (8,3) -- (7,3);
\draw (7,3) -- (5,1);
			
\end{tikzpicture}
\end{center}
\caption[ ]{the region $\Delta(\delta)$}

\end{figure}

In this paper, we study the analogous \(L^p\)–\(L^q\) boundedness problem for the Bochner–Riesz operator of negative order associated with the magnetic Schr\"odinger  operator \(\mathcal{L}_{\mathbf{A}}\). Although the spectral definition \eqref{oper:BR} is initially valid for \(\Re \delta > -1\), it extends to all \(\delta \in \mathbb{C}\) via analytic continuation. To see this, let's define $A(\theta):[0,2\pi)\to \mathbb{R}$ such that
\begin{equation}\label{equ:alpha}
A(\theta)={\bf A}(\cos\theta,\sin\theta)\cdot (-\sin\theta,\cos\theta),
\end{equation}
and we define the constant $\alpha$ to be
$$\alpha=\Phi_\mathbf{A}=\frac1{2\pi}\int_0^{2\pi} A(\theta) \, d \theta,$$
which is regarded as the total magnetic flux of the magnetic field.\vspace{0.2cm}

Let $\lambda>0$, we define the resolvent of the self-adjoint operator $\mathcal{L}_\mathbf{A}$ by
\begin{equation}\label{def:res}
	\big(\mathcal{L}_{\mathbf{A}}-(\lambda^2\pm i0)\big)^{-1}
	=\lim_{\epsilon\searrow0}\big(\mathcal{L}_{\mathbf{A}}-(\lambda^2\pm i\epsilon)\big)^{-1},
\end{equation}
where we use the same notation $\mathcal{L}_\mathbf{A}$ to denote its Friedrichs self-adjoint extension of the Hamiltonian \eqref{LA}. In \cite{GYZZ}, the authors established the resolvent kernel as follows: 
Let $x=r_1(\cos\theta_1,\sin\theta_1)$ and $y=r_2(\cos\theta_2,\sin\theta_2)$, then we have the expression of resolvent kernel
\begin{align}\label{equ:res-ker-out}
\big(\mathcal{L}_{\mathbf{A}}-(\lambda^2\pm i0)\big)^{-1}(x,y)=&\frac1{\pi}\int_{\mathbb{R}^2}\frac{e^{-i(x-y)\cdot\xi}}{|\xi|^2-(\lambda^2\pm i0)}\;d\xi\, A_\alpha(\theta_1,\theta_2)\\\nonumber
&+\frac1{\pi}\int_0^\infty \int_{\mathbb{R}^2}\frac{e^{-i{\bf n}_s\cdot\xi}}{|\xi|^2-(\lambda^2\pm i0)}\;d\xi \, B_\alpha(s,\theta_1,\theta_2)\;ds,
\end{align}
where  ${\bf n}_s=(r_1+r_2, \sqrt{2r_1r_2(\cosh s-1)})$ and where
\begin{equation}\label{A-al}
\begin{split}
&A_{\alpha}(\theta_1,\theta_2)= \frac{e^{i\int_{\theta_1}^{\theta_2}\,A(\theta')d\theta'}}{4\pi^2}\times \Big(\mathbf{1}_{[0,\pi]}(|\theta_1-\theta_2|)
+e^{-i2\pi\alpha}\mathbf{1}_{[\pi,2\pi]}(\theta_1-\theta_2)+e^{i2\pi\alpha}\mathbf{1}_{[-2\pi,-\pi]}(\theta_1-\theta_2)\Big),
\end{split}
\end{equation}
and
\begin{equation}\label{B-al}
\begin{split}
&B_{\alpha}(s,\theta_1,\theta_2)= -\frac{1}{4\pi^2}e^{-i\alpha(\theta_1-\theta_2)+i\int_{\theta_2}^{\theta_{1}} A(\theta') d\theta'}  \Big(\sin(|\alpha|\pi)e^{-|\alpha|s}\\
&\qquad +\sin(\alpha\pi)\frac{(e^{-s}-\cos(\theta_1-\theta_2+\pi))\sinh(\alpha s)-i\sin(\theta_1-\theta_2+\pi)\cosh(\alpha s)}{\cosh(s)-\cos(\theta_1-\theta_2+\pi)}\Big).
\end{split}
\end{equation}

According to Stone's formula, the spectral measure is related to the resolvent
\begin{equation}\label{equ:spemes}
dE_{\sqrt{\mathcal{L}_\mathbf{A}}}(\lambda)=\frac{d}{d\lambda}E_{\sqrt{\mathcal{L}_\mathbf{A}}}(\lambda)\;d\lambda
=\frac{\lambda}{i\pi}\big(R(\lambda+i0)-R(\lambda-i0)\big)\;d\lambda
\end{equation}
where the resolvent is given by
$$R(\lambda\pm i0)=\big(\mathcal{L}_{\mathbf{A}}-(\lambda^2\pm i0)\big)^{-1}=\lim_{\epsilon\searrow0}\big(\mathcal{L}_{\mathbf{A}}-(\lambda^2\pm i\epsilon)\big)^{-1}.$$
Let $x=r_1(\cos\theta_1, \sin\theta_1)$ and $y=r_2(\cos\theta_2, \sin\theta_2)$ in $\mathbb{R}^2\setminus\{0\}$. It follows from \eqref{equ:res-ker-out}
 and \eqref{equ:spemes} that the Schwartz kernel of the spectral measure satisfies
\begin{equation}\label{ker:spect}
dE_{\sqrt{\mathcal{L}_{\mathbf{A}}}}(\lambda;x,y) =\frac{\lambda}{\pi} \Big(
\int_{\mathbb{S}^1} e^{-i\lambda (x-y)\cdot\omega} d\sigma_\omega A_{\alpha}(\theta_1,\theta_2)
+\int_0^\infty \int_{\mathbb{S}^1} e^{-i\lambda {\bf n}_s\cdot\omega} d\sigma_\omega
B_{\alpha}(s,\theta_1,\theta_2) ds\Big),
\end{equation}
where $A_{\alpha}(\theta_1,\theta_2)$ and $B_{\alpha}(s,\theta_1,\theta_2)$ are given in \eqref{A-al} and \eqref{B-al}. Then the kernel of $S^{\delta}_{\lambda}(\mathcal{L}_{\mathbf{A}})$ can be represented as 
\begin{equation}\label{ker:BR}
\begin{split}
\Big(1-\frac{\mathcal{L}_\mathbf{A}}{\lambda^2}\Big)^\delta_{+}(x,y)
:=&\frac{1}{\Gamma(1+\delta)}\int_0^\infty \Big(1-\frac{\rho^2}{\lambda^2}\Big)^\delta_{+} dE_{\sqrt{\mathcal{L}_{\mathbf{A}}}}(\rho;x,y)  \, d\rho\\
=&\lambda^2 \pi^{-\delta}(2\pi)^{1+\delta}(\lambda|x-y|)^{-1-\delta} J_{1+\delta}(\lambda|x-y|) A_{\alpha}(\theta_1,\theta_2)\\
&+\lambda^2 \pi^{-\delta}(2\pi)^{1+\delta}\int_0^{\infty}(\lambda|\mathbf{n}_s|)^{-1-\delta} J_{1+\delta}(\lambda|\mathbf{n}_s|)\, B_{\alpha}(s,\theta_1,\theta_2)ds
\end{split}
\end{equation}
for $\Re \delta>-1$. Thus we have
\begin{equation}\label{oper:BR-delta}
[S_{\lambda}^\delta(\mathcal{L}_\mathbf{A})f](x)=\int_{\mathbb{R}^2}\Big(1-\frac{\mathcal{L}_\mathbf{A}}{\lambda^2}\Big)^\delta_{+}(x,y)f(y)dy.
\end{equation}
For our present purposes, we will define $S^{\delta}_{\lambda}(\mathcal{L}_{\mathbf{A}})$ for $\delta\in (-\frac{3}{2},0)$ by \eqref{oper:BR-delta} and \eqref{ker:BR}.

In this paper, we determine all pairs $(p,q)$ such that the operator $S^{\delta}_{\lambda}(\mathcal{L}_{\mathbf{A}})$ on $\mathbb{R}^2$ $(-\frac{3}{2}<\delta<0)$ is bounded from $L^p(\mathbb{R}^2)$ to $L^q(\mathbb{R}^2)$. Our main result is the following.

\begin{theorem}\label{thm:LA0}
Let $0<\delta<\frac{3}{2}$ and let  $S^{-\delta}_\lambda(\mathcal{L}_{\mathbf{A}})$ be the Bochner-Riesz operator defined by
\eqref{oper:BR-delta} and \eqref{ker:BR}.
Then, there exists  a constant $C=C(p,q,\delta)$ independent of $f$ such that 
\begin{equation}\label{est:BR}
\|S^{-\delta}_{\lambda}(\mathcal{L}_{\mathbf{A}})f\|_{L^q(\mathbb{R}^2)}\leq C\lambda^{2\left(\frac{1}{p}-\frac{1}{q}\right)}\|f\|_{L^p(\mathbb{R}^2)},
\end{equation}
if and only if $(1/p,1/q)\in \Delta(\delta)$.
\end{theorem}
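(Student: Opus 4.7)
We first reduce to $\lambda=1$: the scaling identity $K_\lambda(x,y)=\lambda^2 K_1(\lambda x,\lambda y)$, which follows from $|\mathbf{n}_s(\lambda x,\lambda y)|=\lambda|\mathbf{n}_s(x,y)|$ together with the angular invariance of $A_\alpha$ and $B_\alpha$, yields the factor $\lambda^{2(1/p-1/q)}$ in \eqref{est:BR}. We then decompose the kernel \eqref{ker:BR} as $K=K_A+K_B$, where $K_A$ is the first summand and $K_B$ is the $s$-integral; note that $B_\alpha$ vanishes when $\alpha\in\mathbb Z$, so $K_B$ is the purely diffractive magnetic contribution. By Stein's complex interpolation in $\delta$ together with duality, sufficiency reduces to the three endpoint bounds at $D=(1,0)$ and at the Carleson--Sj\"olin vertices $B=(\tfrac14+\tfrac{\delta}2,\tfrac14-\tfrac{\delta}6)$ and $B'$ lying on the line $\tfrac1p+\tfrac3q=1$; the exclusion of the open segments $AB$ and $A'B'$ reflects the strict nature of the endpoint bound at $B$, $B'$.

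For $K_A$, the argument of Bak \cite{B} transfers with cosmetic changes. Using $|J_{1-\delta}(r)|\lesssim (1+r)^{-1/2}$ and $|A_\alpha|\leq C$ one has $|K_A(x,y)|\lesssim(1+|x-y|)^{-3/2+\delta}$, which is uniformly bounded for $\delta<3/2$ and gives $L^1\to L^\infty$ at $D$. At $B$ and $B'$, the Bessel asymptotic $J_{1-\delta}(r)\sim r^{-1/2}(c_+ e^{ir}+c_- e^{-ir})$ for $|x-y|\gg 1$ represents $K_A$ as an oscillatory integral kernel with phase $\pm|x-y|$, whose associated canonical relation has maximal rank in the plane. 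The Carleson--Sj\"olin theorem \cite{CS} applied with the bounded amplitude $A_\alpha(\theta_1,\theta_2)$ then yields the endpoint bound on the line $\tfrac1p+\tfrac3q=1$ (the bounded angular factor is inert with respect to the rank/curvature hypothesis). The near-diagonal portion $|x-y|\lesssim 1$ is handled directly by Young's inequality.

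For $K_B$ we split the $s$-integral at $s=1$. On $s\geq 1$, the decay $|B_\alpha(s,\cdot)|\lesssim e^{-|\alpha|s}$ combined with the Bessel bound and $|\mathbf{n}_s|\geq r_1+r_2$ yields a kernel $\lesssim c_\alpha(1+|x-y|)^{-3/2+\delta}$, handled as in the $K_A$ case. On $s\in(0,1]$, apply Minkowski's inequality in $s$ and reduce to the uniform-in-$s$ bound $\|\mathrm{Op}(K_B^{(s)})\|_{L^p\to L^q}\lesssim 1$: since $|\mathbf{n}_s|^2=(r_1+r_2)^2+2r_1r_2(\cosh s-1)$ is a smooth, $s$-controlled perturbation of $|x-y|^2$, the Carleson--Sj\"olin non-degeneracy of the associated phase is preserved, and the argument of the previous paragraph gives the required uniform endpoint estimate. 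The integrable singularity of $\cosh s-\cos(\theta_1-\theta_2+\pi)$ in $B_\alpha$ as $s\to 0$ and $\theta_1-\theta_2\to -\pi$ is cancelled by the $\sin(\theta_1-\theta_2+\pi)$ factor in the numerator, so the amplitude is uniformly bounded on $(0,1]$.

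Necessity is obtained from the standard test functions. The characteristic function of a unit ball yields the Hardy--Littlewood--Sobolev condition $\tfrac1p-\tfrac1q\geq\tfrac{2\delta}3$, saturated at $B$, $B'$; a Knapp wave packet supported on a $1\times\lambda^{-1/2}$ tube tangent to the characteristic variety $\{|\xi|=1\}$ saturates $\tfrac1p>\tfrac14+\tfrac{\delta}2$, and duality gives $\tfrac1q<\tfrac34-\tfrac{\delta}2$. The principal obstacle is the uniform-in-$s$ Carleson--Sj\"olin estimate for $K_B^{(s)}$ on $s\in(0,1]$: the joint analysis of the $s$-dependent phase $|\mathbf{n}_s|$ and the near-diagonal behavior of $B_\alpha$ near $\theta_1-\theta_2=-\pi$, $s=0$ requires a careful dyadic decomposition in both $s$ and $|\theta_1-\theta_2+\pi|$, and the constants must be tracked so that the resulting sum (or $\int_0^1 ds$) remains finite.
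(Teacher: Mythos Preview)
Your reduction to $\lambda=1$ and the split $K=K_A+K_B$ match the paper, but the proposed treatment of both pieces has genuine gaps.

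For $K_A$, the amplitude $A_\alpha(\theta_1,\theta_2)$ is \emph{not} a smooth cutoff: it contains the jump functions $\mathbf{1}_{[0,\pi]}(|\theta_1-\theta_2|)$ and $\mathbf{1}_{[\pi,2\pi]}(\pm(\theta_1-\theta_2))$. The Carleson--Sj\"olin/H\"ormander theorem requires $C^\infty$ amplitudes, so you cannot simply declare the bounded angular factor ``inert.'' The paper addresses exactly this point (Proposition~\ref{prop:TGj}): it expands $\mathbf{1}_{[0,\pi]}$ in a truncated Fourier series, applies Lemma~\ref{lem:Bak's lemma} to each smooth summand, and controls the tail by Schur's test. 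This costs a $\lambda^\varepsilon$ loss, which is harmless for the restricted weak-type argument on $(BB')$ but would not sit well with your complex-interpolation-at-the-endpoint scheme.

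For $K_B$, the plan fails in two independent ways. First, your claim that the singularity of $B_\alpha$ near $s=0$, $\theta_1-\theta_2=-\pi$ ``is cancelled by the $\sin(\theta_1-\theta_2+\pi)$ factor'' is false: writing $b\sim|\theta_1-\theta_2+\pi|$, the third term of $B_\alpha$ behaves like $b/(s^2+b^2)$, which is $\sim 1/b$ when $s\sim b$ and hence unbounded. It is $s$-integrable (this is \eqref{equ:ream4}), but your Minkowski-in-$s$ reduction needs a uniform pointwise bound, which you do not have. Second, and more fundamentally, $|\mathbf{n}_s|=\sqrt{r_1^2+r_2^2+2r_1r_2\cosh s}$ depends on $x$ and $y$ only through $r_1=|x|$ and $r_2=|y|$; the mixed Hessian $\partial_x\partial_y|\mathbf{n}_s|$ therefore has rank one for every $s$, and the Carleson--Sj\"olin hypothesis is \emph{never} satisfied. (Your assertion that $|\mathbf{n}_s|^2$ is a perturbation of $|x-y|^2$ is also incorrect; at $s=0$ it equals $(r_1+r_2)^2=(|x|+|y|)^2$.) The paper's route is entirely different: it integrates in $s$ \emph{first}, using van der Corput on the phase $s\mapsto|\mathbf{n}_s|$, to obtain the pointwise kernel bound $|K_D^{\ell,j}|\lesssim 2^{-j(3/2-\delta)}(1+2^jr_1r_2)^{-1/2}$ (Lemma~\ref{lem:ker-est12}), and then invokes a dedicated $L^p\to L^q$ lemma for kernels of the form $(1+2^jr_1r_2)^{-1/2}\beta(r_1+r_2)$ (Proposition~\ref{lem:kerlplq}). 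The most delicate part is the $\ell=3$ kernel, which is precisely where your proposed cancellation fails and which the paper handles by a Morse-lemma change of variable plus an explicit model integral.

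Finally, for necessity the paper does not build test functions for $\mathcal{L}_{\mathbf A}$ directly; it invokes the Kenig--Stanton--Tomas transplantation theorem to transfer the known sharpness from $-\Delta$.
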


\begin{remark}
In the free context $\mathbf{A}\equiv 0$, the above result corresponds to \cite{B}. When $\delta=1$, Theorem \ref{thm:LA0} is essentially the uniform resolvent estimates. And the corresponding results in this direction were obtained in \cite{FZZ1}. Through the theory of intertwining operators established in \cite{FSWZZ}, we know that the same $L^p-L^q$ boundedness results hold for Bochner-Riesz operator associated with the following electromagnetic Schr\"{o}dinger operator on $\mathbb{R}^2$:
\[\mathcal{L}_{\mathbf{A},a}:=\left(-i\nabla+\frac{\mathbf{A}(\hat{x})}{|x|}\right)^2+\frac{a(\hat{x})}{|x|^2},\]
where $\hat{x},\mathbf{A}$ are defined as before, and $a\in W^{1,\infty}(\mathbb{S}^1)$. 
\end{remark}

\begin{remark}
The necessary part of \eqref{est:BR} follows by a transplantation theorem due to Kenig-Stanton-Tomas\cite{KST} and the necessary condition for $L^p-L^q$ boundedness on the classicial Bochner-Riesz operator for $S^{-\delta}_{\lambda}(-\Delta)$.
\end{remark}

This paper is organized as follows. In Section \ref{sec:thmmain}, we derive an explicit expression for the kernel of the Bochner–Riesz operator $S_{\lambda}^\delta(\mathcal{L}_\mathbf{A})$. As indicated in \eqref{ker:BR}, the kernel splits naturally into two components, denoted by the geometry term $G$ and  the diffractive term $D$. By exploiting this decomposition, we reduce the proof of Theorem \ref{thm:LA0} to establishing certain restricted weak-type estimates for the operators corresponding to $G$ and $D$. The analysis of the $G$-component is presented in Section \ref{sec:TG}, while the estimates for the $D$-component are treated in Section \ref{sec:TD}.

It is worth noting that, in dealing with these estimates, the techniques developed in \cite{FZZ1} and \cite{MYZ} suffice for the $D$-component. For the $G$-component, however, we provide a substantially simpler argument by establishing a ``stability lemma" for an associated oscillatory integral operator; see Proposition \ref{prop:TGj} for a precise statement.

\section{Some Standard Reductions}
\label{sec:thmmain}
In this section, we reduce the proof Theorem \ref{thm:LA0} to proving some restricted weak type estimates. By the scaling invariant of the operator $\mathcal{L}_{\mathbf{A}}$, it suffices to prove \eqref{est:BR} when $\lambda= 1$, that is,
\begin{equation}\label{est:BR1}
\|S^{-\delta}_{1}(\mathcal{L}_{\mathbf{A}})f\|_{L^q(\mathbb{R}^2)}\leq C\|f\|_{L^p(\mathbb{R}^2)},
\end{equation}
where $p,q$ are as in Theorem \ref{thm:LA0}.
Indeed,  from  the definition, one can see that
$S_{\lambda}^{-\delta}(\mathcal{L}_\mathbf{A})(x,y)=\lambda^2 S_{1}^{-\delta}(\mathcal{L}_\mathbf{A})(\lambda x, \lambda y),$
hence \eqref{est:BR1} implies \eqref{est:BR}. We briefly write $S^{-\delta}(\mathcal{L}_\mathbf{A})=S_{1}^{-\delta}(\mathcal{L}_\mathbf{A})$.

Let us recall that Bochner-Riesz operator of order $\delta$  are defined by \eqref{oper:BR-delta}. We have the following proposition.

\begin{proposition}
	\label{prop: ker-BR}
	Let $x=r_1(\cos\theta_1,\sin\theta_1)$, $y=r_2(\cos\theta_2,\sin\theta_2)$ and $0<\delta<\tfrac32$.
	Define
	\begin{equation}\label{d-j}
		d(r_1,r_2,\theta_1,\theta_2)=\sqrt{r_1^2+r_2^2-2r_1r_2\cos(\theta_1-\theta_2)}=|x-y|,
	\end{equation}
	and
	\begin{equation} \label{d-s}
		d_s(r_1,r_2,\theta_1,\theta_2)=|{\bf n}_s|=\sqrt{r_1^2+r_2^2+2  r_1r_2\, \cosh s},\quad s\in [0,+\infty).
	\end{equation}
	Then the kernel of the Bochner-Riesz operator can be written as
	\begin{equation}
		\label{ker:BR-0}
		\Big(1-\mathcal{L}_{\mathbf{A}}\Big)^{-\delta}_{+}(x,y)
		=G(\delta; r_1,\theta_1;r_2,\theta_2)+ D(\delta; r_1,\theta_1;r_2,\theta_2).
	\end{equation}
	Here
	\begin{equation}
		\label{ker:BR-G}
		\begin{split}
			G(\delta; r_1,\theta_1;r_2,\theta_2)
			&\sim \sum_{j=0}^{\infty}\Big[\frac{a^j_+e^{i d}}{ d^{\frac{3}2-\delta+j}}+\frac{a^j_-e^{-i d}}{ d^{\frac{3}2-\delta+j}}\Big] \times
			A_{\alpha}(\theta_1,\theta_2)
		\end{split}
	\end{equation}
	as $d\to\infty$, and
	\begin{equation}
		\label{ker:BR-D}
		\begin{split}
			D(\delta; r_1,\theta_1;r_2,\theta_2)
			&\sim\int_0^\infty \sum_{j=0}^{+\infty}\Big[\frac{a^j_+e^{i d_s}}{d_s^{\frac{3}2-\delta+j}}+\frac{a^j_-e^{-i d_s}}{d_s^{\frac{3}2-\delta+j}}\Big]  \, B_{\alpha}(s,\theta_1,\theta_2) ds
		\end{split}
	\end{equation}
	as $d_s\to\infty$, where the $a^j_{\pm}$ are suitable coefficients.
\end{proposition}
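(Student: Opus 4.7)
The plan is to read off both expansions directly from the exact kernel formula \eqref{ker:BR} by substituting the Hankel large-argument asymptotic of the Bessel function into each piece. Specializing \eqref{ker:BR} to $\lambda=1$ and to $-\delta$ in place of $\delta$ produces, with the constant $c_\delta=\pi^{\delta}(2\pi)^{1-\delta}/\Gamma(1-\delta)$, the identity
\[
(1-\mathcal{L}_\mathbf{A})^{-\delta}_+(x,y)=c_\delta\,d^{\delta-1}J_{1-\delta}(d)\,A_\alpha(\theta_1,\theta_2)+c_\delta\int_0^\infty d_s^{\delta-1}J_{1-\delta}(d_s)\,B_\alpha(s,\theta_1,\theta_2)\,ds.
\]
I would declare $G$ to be the first summand and $D$ the integral; this is exactly the splitting \eqref{ker:BR-0}, and only the two asymptotic expansions remain to be established.

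For the $G$-piece I invoke the classical Hankel expansion: for any $\nu\in\mathbb{R}$ and any $N\geq 0$,
\[
J_\nu(r)=\sum_{j=0}^{N}\bigl[\mathbf{a}_j^{+}(\nu)\,r^{-1/2-j}e^{ir}+\mathbf{a}_j^{-}(\nu)\,r^{-1/2-j}e^{-ir}\bigr]+O_N(r^{-N-3/2}),\qquad r\geq 1,
\]
with explicit constants $\mathbf{a}_j^{\pm}(\nu)$ built from $e^{\mp i((\nu+1/2)\pi/2)}/\sqrt{2\pi}$ and gamma-ratios. Taking $\nu=1-\delta$, $r=d$, multiplying by the prefactor $c_\delta d^{\delta-1}A_\alpha(\theta_1,\theta_2)$, and noting $r^{-1/2-j}\cdot r^{\delta-1}=r^{-3/2+\delta-j}$ yields exactly \eqref{ker:BR-G} with coefficients $a_j^{\pm}=c_\delta\,\mathbf{a}_j^{\pm}(1-\delta)$ depending only on $\delta$ and $j$.

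For the $D$-piece I would apply the same expansion to $J_{1-\delta}(d_s)$ inside the $s$-integral and exchange sum and integral term by term. The justification rests on the exponential decay of $B_\alpha(s,\theta_1,\theta_2)$ in $s$ uniformly in the angles, read off directly from \eqref{B-al}: the first summand of $B_\alpha$ carries $e^{-|\alpha|s}$, and in the second the denominator $\cosh s-\cos(\theta_1-\theta_2+\pi)$ grows like $\tfrac12 e^s$ against a numerator of size $O(e^{|\alpha|s})$, giving a net decay of order $e^{-(1-|\alpha|)s}$ in the standard normalization $|\alpha|<1$ of the flux. Combined with the elementary lower bound $d_s\geq r_1+r_2$ for all $s\geq 0$, the Bessel remainder after $N$ terms is $O(d_s^{\delta-N-5/2})$ uniformly in $s$, and its $s$-integral against $B_\alpha$ is $O((r_1+r_2)^{\delta-N-5/2})$. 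This produces \eqref{ker:BR-D}. The main obstacle, such as it is, is this uniform-in-$s$ exponential control on $B_\alpha$; once it is in hand, the proposition follows by a purely term-by-term computation and no stationary-phase or oscillation argument is required.
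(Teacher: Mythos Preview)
Your proposal is correct and follows essentially the same route as the paper: define $G$ and $D$ as the two pieces of the exact Bessel-function representation \eqref{ker:BR} (specialized to $\lambda=1$, $-\delta$), then insert the Hankel large-argument expansion of $J_{1-\delta}$ into each. The paper's own proof is in fact terser than yours---it simply quotes the Bessel asymptotic and declares \eqref{ker:BR-G}, \eqref{ker:BR-D} to follow---whereas you additionally justify the term-by-term exchange in the $D$-integral via the exponential decay of $B_\alpha$; this extra care is harmless and, if anything, an improvement.
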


\begin{proof}
	Let us define
	\begin{equation}
		G(\delta; r_1,\theta_1;r_2,\theta_2)
		=\pi^{\delta}(2\pi)^{1-\delta}|x-y|^{-1+\delta} J_{1-\delta}(|x-y|) A_{\alpha}(\theta_1,\theta_2)
	\end{equation}
	and
	\begin{equation}
		D(\delta; s; r_1,\theta_1;r_2,\theta_2)
		= \pi^{\delta}(2\pi)^{1-\delta}\int_0^{\infty}|\mathbf{n}_s|^{-1+\delta} J_{1-\delta}(|\mathbf{n}_s|)
		\, B_{\alpha}(s,\theta_1,\theta_2)ds.
	\end{equation}
	Then from \eqref{ker:BR}, it suffices to prove \eqref{ker:BR-G} and \eqref{ker:BR-D}. We have the complete asymptotic expansion of Bessel function $J_{\nu}(r)$ (e.g. \cite[(15) Chapter 8, P338]{Stein})
	$$J_{\nu}(r)\sim r^{-\frac12}e^{ir}\sum_{j=0}^\infty a^j_+ r^{-j}+r^{-\frac12}e^{-ir}\sum_{j=0}^\infty a^j_- r^{-j}, \quad \text{as}\,\, r\to+\infty$$
	for suitable coefficients $a^j_{\pm}$. Therefore, these imply that  \eqref{ker:BR-G} and \eqref{ker:BR-D} hold. 
\end{proof}

First, a direct computation yields the following facts (proved in \cite{FZZ}) that
\begin{align}\label{equ:ream1}
	|x-y|\lesssim|&\mathbf{n}_s|,\\\label{equ:ream2}
	\int_0^\infty e^{-|\alpha|s}\;ds\lesssim&1,\\\label{equ:ream3}
	\int_0^\infty  \Big|\frac{(e^{-s}-\cos(\theta_1-\theta_2+\pi))\sinh(\alpha s)}{\cosh(s)-\cos(\theta_1-\theta_2+\pi)}\Big|\;ds\lesssim&1,\\\label{equ:ream4}
	\int_0^\infty  \Big|\frac{\sin(\theta_1-\theta_2+\pi)\cosh(\alpha s)}{\cosh(s)-\cos(\theta_1-\theta_2+\pi)}\Big|\;ds\lesssim&1.
\end{align}
Using \eqref{ker:BR-0} and \eqref{equ:ream1}-\eqref{equ:ream4}, we easily obtain that
$$\Big|\Big(1-\frac{\mathcal{L}_\mathbf{A}}{\lambda^2}\Big)^{-\delta}_{+}(x,y)\Big|\lesssim \langle |x-y|\rangle^{-\frac32+\delta}.$$
This together with Young's inequality implies 
$$\|S^{-\delta}(\mathcal{L}_{\mathbf{A}})f\|_{L^\infty(\mathbb{R}^2)}\leq C\|f\|_{L^p(\mathbb{R}^2)},\;\frac1p>\frac14+\frac{\delta}2.$$
And so  \eqref{est:BR1}  holds true on the line $AD$. Thus, by interpolation, it suffices to prove \eqref{est:BR1} when $(1/p, 1/q)$ lies on the open segment $BB'$, that is, $1/p-1/q=2\delta/3$ and $1/4-\delta/6<1/q< 3/4-\delta/2$, for $0<\delta<3/2$\footnote{For convenience, in this paper we will denote the open segment $BB'$ by $(BB')$.}. To prove this, it suffices to prove that there exists a constant $C$ such that
\begin{equation}
	\label{equ:goalredu0}
	\|S^{-\delta}(\mathcal{L}_{\mathbf{A}})\chi_E\|_{L^q(\mathbb{R}^2)}\leq C||\chi_E||_{L^p(\mathbb{R}^2)},
\end{equation}
for any characteristic function $\chi_E$ and $(1/p,1/q)\in (BB')$ and  $q>4$. Finally, by duality and interpolation in Lorentz spaces, the estimate \eqref{equ:goalredu0} implies \eqref{est:BR1} on $(BB')$.

By Proposition \ref{prop: ker-BR}, we obtain
\begin{align}\label{eq:log1}
	S^{-\delta}(\mathcal{L}_\mathbf{A})(x,y)=&\sum_{\pm}\big(G^{\pm}+D^{\pm}\big)(r_1,\theta_1;r_2,\theta_2)+R(r_1,\theta_1;r_2,\theta_2)
\end{align}
where the kernels are defined by
\begin{equation}
	\label{ker-G}
	\begin{split}
		G^{\pm}(r_1,\theta_1;r_2,\theta_2)&=e^{\pm i|x-y|} (1+|x-y|)^{-\frac32+\delta} a^1_\pm \times A_\alpha(\theta_1,\theta_2),\\
		D^\pm(r_1,\theta_1;r_2,\theta_2)&= \int_0^\infty e^{\pm i|{\bf n}_s|} (1+|{\bf n}_s|)^{-\frac32+\delta} a^1_\pm\times B_\alpha(s,\theta_1,\theta_2)\;ds,
	\end{split}
\end{equation}
and $R(r_1,\theta_1;r_2,\theta_2)$ is the term with lower order. Here $A_\alpha(\theta_1,\theta_2)$, $B_\alpha(s,\theta_1,\theta_2)$ are as in \eqref{A-al} and \eqref{B-al}. Hence, we write that
\begin{align*}
	S^{-\delta}(\mathcal{L}_\mathbf{A})f(x)
	=\big(T_{G^+}f+T_{G^-}f+T_{D^+}f+T_{D^-}f+T_{R}f\big)(x),
\end{align*}
where
$$T_{K}f(x)=\int_0^\infty \int_0^{2\pi }K(r_1,\theta_1;r_2,\theta_2) f(r_2,\theta_2)  d\theta_2 \,r_2 dr_2.$$
Thus, to prove \eqref{equ:goalredu0}, it suffices to prove that there exists a constant $C$ such that
\begin{equation}
	\label{equ:goalredu}
	\|T_{K}\chi_E\|_{L^q(\mathbb{R}^2)}\leq C||\chi_E||_{L^p(\mathbb{R}^2)}, \quad K\in\{G^\pm, D^\pm\}
\end{equation}
for any characteristic function $\chi_E$.

\section{The estimate of $ T_{G^\pm}$}
\label{sec:TG}
In this section, we prove
\begin{equation}
	\label{equ:tg1red}
	\|T_{G^\pm}\chi_E\|_{L^{q}(\mathbb{R}^2)}\leq C||\chi_E||_{L^{p}(\mathbb{R}^2)}
\end{equation}
if $(1/p,1/q)\in (BB') (q>4)$. Since $\pm$ is not essential in the proof, we only consider one case. To light the notation, we replace ${G^\pm}$ by $G$.

By dropping the factors $e^{\pm i\alpha\pi}$ and $e^{i\int_{\theta_1}^{\theta_2}A(\theta')d\theta'}$ in $A_\alpha(\theta_1,\theta_2)$, it suffices to prove
\begin{equation}\label{equ:tg2red}
	\begin{split}
		\Big\|\int_0^\infty \int_0^{2\pi } K^{\delta}(x-y)\mathbf{1}_{I}(|\theta_1-\theta_2|)\chi_E(r_2,\theta_2) r_2 dr_2 d\theta_2\Big\|_{L^q(\mathbb{R}^2)}\leq C\|\chi_E\|_{L^p(\mathbb{R}^2)}
	\end{split}
\end{equation}
where
\begin{equation*}
	\begin{split}
		K^{\delta}(x)&:=e^{i|x|} |x|^{-\frac32+\delta}
	\end{split}
\end{equation*}
and $I=[0,\pi]$ and $[\pi,2\pi)$. We will only consider the case that $I=[0,\pi]$. Indeed, if $I=[\pi, 2\pi)$, by replacing $\theta_1$ by $\theta_1\pm 2\pi$, the same argument works.

\begin{proposition}\label{prop:TGj}
Assume that $\psi$ is a smooth function with compact support away from $0$. For $\lambda \geq 1$, we define
\[ T^\lambda_G(f)(x) = \int_{\mathbb{R}^2} e^{2\pi i \lambda |x-y|} \psi(x-y)\mathbf{1}_{I}(|\theta_1-\theta_2|) f(y) dy.\]
Then, we have
\begin{equation}\label{equ:TGfest}
	\|T^\lambda_G f\|_{L^q(\mathbb{R}^2)} \leq C \lambda^{-2/q} \|f\|_{L^r(\mathbb{R}^2)} 
\end{equation}
if $q > 4$ and $\tfrac3q + \tfrac1r =1$, i.e. $q = 3r'$.
\end{proposition}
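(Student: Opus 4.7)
The plan is to deduce the stated $L^r\to L^q$ bound by combining a Carleson--Sj\"olin type $L^4\to L^4$ estimate $\|T^\lambda_G f\|_{L^4}\lesssim \lambda^{-1/2}\|f\|_{L^4}$ with the trivial $L^1\to L^\infty$ bound, and then applying Riesz--Thorin interpolation. The new difficulty relative to the classical Carleson--Sj\"olin result is the presence of the rough angular cutoff $\mathbf{1}_I(|\theta_1-\theta_2|)$ in the amplitude; the bulk of the argument will be devoted to showing that this does not destroy the $\lambda^{-1/2}$ gain, which is the essence of the ``stability'' claim mentioned in the introduction.

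First, I would verify that the phase $\Phi(x,y)=|x-y|$ on the set $\{x-y\in\operatorname{supp}\psi\}$ satisfies the Carleson--Sj\"olin cone condition. A direct computation gives
\[
\partial^2_{xy}\Phi=\frac{1}{|x-y|}\Big(-I+\frac{(x-y)\otimes(x-y)}{|x-y|^2}\Big),
\]
of constant rank one, with null direction $(x-y)/|x-y|$; as $y$ varies with $x$ fixed, this direction sweeps out a non-degenerate arc of $S^1$. The classical Carleson--Sj\"olin theorem then yields the $L^4\to L^4$ boundedness with constant $C\lambda^{-1/2}$ for the version of $T^\lambda_G$ in which $\mathbf{1}_I$ is replaced by any smooth, compactly supported amplitude.

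To handle the rough cutoff I would use a two-scale decomposition. For a small parameter $\eta>0$ to be chosen, write $\mathbf{1}_I=\chi_\eta+\rho_\eta$, where $\chi_\eta$ is smooth and supported in the $\eta$-interior of $I$, and $\rho_\eta$ is supported in an $\eta$-neighborhood of the boundary $\{|\theta_1-\theta_2|=\pi\}$. The Carleson--Sj\"olin estimate applied with $\chi_\eta$ in place of $\mathbf{1}_I$ gives an $L^4\to L^4$ bound of $C(\eta)\lambda^{-1/2}$, with $C(\eta)$ deteriorating only polynomially in $\eta^{-1}$. For the $\rho_\eta$ piece, the compact support of $\psi$ forces $|x-y|\lesssim 1$; combined with near-antipodality of the angles this confines $(x,y)$ to a region where the kernel has $L^1$-in-one-variable norm $O(\eta)$, so Schur's test yields an $L^4\to L^4$ bound of $O(\eta)$. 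Choosing $\eta$ as a small power of $\lambda^{-1}$ balances the two contributions and yields the $L^4\to L^4$ bound of $C\lambda^{-1/2}$ for $T^\lambda_G$ itself.

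Finally, since $|\mathbf{1}_I|\le 1$ and $\psi$ has compact support, the trivial bound $\|T^\lambda_G f\|_{L^\infty}\le\|\psi\|_{L^\infty}\|f\|_{L^1}$ is immediate. Riesz--Thorin interpolation with parameter $\theta=4/q\in(0,1)$ between the $L^4\to L^4$ bound of constant $\lambda^{-1/2}$ and this $L^1\to L^\infty$ bound of constant $O(1)$ produces $\|T^\lambda_G f\|_{L^q}\le C\lambda^{-2/q}\|f\|_{L^r}$ on the line $1/r=1-3/q$, equivalently $q=3r'$, for every $q>4$, which is the desired conclusion. The hardest step is the treatment of the boundary region in the two-scale decomposition: a crude smoothing of $\mathbf{1}_I$ would lose the oscillatory gain, so one must use the $\eta$-smallness of the boundary support together with the compactness of $\operatorname{supp}\psi$ to ensure that no $\lambda$-dependent loss accumulates.
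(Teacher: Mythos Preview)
Your two-scale decomposition does not close. If $\chi_\eta$ transitions from $1$ to $0$ over an interval of length $\eta$, then as a function of $(x,y)$ the amplitude $\psi(x-y)\chi_\eta(|\theta_1-\theta_2|)$ has $C^k$ norm of order $\eta^{-k}$ on the region $|x|,|y|\lesssim 1$ (where the polar angles vary by $O(1)$). The Carleson--Sj\"olin constant depends on finitely many such derivative norms, so $C(\eta)\gtrsim\eta^{-N}$ for some fixed $N\ge 1$. Balancing $\eta^{-N}\lambda^{-1/2}$ against the Schur bound $O(\eta)$ for the boundary piece gives the optimum at $\eta=\lambda^{-1/(2(N+1))}$ and an overall $L^4\to L^4$ bound of only $\lambda^{-1/(2(N+1))}$ --- far short of $\lambda^{-1/2}$, and too weak to run the dyadic summation that follows the proposition. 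There is a secondary gap as well: $\psi(x-y)\chi_\eta(|\theta_1-\theta_2|)$ is neither compactly supported in $(x,y)$ nor smooth at $x=0$ or $y=0$, so the oscillatory-integral theorem does not apply to it directly; you would need an additional spatial localization even to state the ``smooth part'' bound.

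The paper sidesteps both problems by expanding $\mathbf{1}_I(\theta_1-\theta_2)$ as a Fourier series $\sum_k c_k\,e^{-ik(\theta_1-\theta_2)/2}$. The crucial feature is that each mode \emph{separates}: $e^{-ik(\theta_1-\theta_2)/2}=e^{-ik\theta_1/2}\,e^{ik\theta_2/2}$, and these unimodular factors can be absorbed into the output and into $f$ without changing any Lebesgue norm. Hence each mode obeys the convolution bound of Lemma~\ref{lem:Bak's lemma} with constant independent of $k$; truncating at $|k|<M$ costs only $\sum_{|k|<M}|c_k|\lesssim\log M$, while the tail is controlled by Hausdorff--Young and Schur with a gain $M^{-c}$. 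Choosing $M$ a power of $\lambda$ yields the estimate with an arbitrarily small $\lambda^{\varepsilon}$ loss, which the paper observes is harmless for the main theorem. The idea you are missing is precisely this variable separation: in your decomposition the mollified cutoff genuinely enters the oscillatory-integral amplitude and degrades the constant, whereas in the Fourier expansion it never touches the amplitude at all.
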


\begin{proof}[Proof of \eqref{equ:tg1red} assuming Proposition \ref{prop:TGj}]
Using the standard partition of unity of $B_1(0)^c$:
$$1=\sum_{j\geq1}\beta(2^{-j}r),\quad \beta\in C_c^\infty\big(\big[\tfrac38,\tfrac43\big]\big),$$
we decompose $K^{\delta}=\sum_{j}K^{\delta}_j$, where
$$K^{\delta}_j(x):=\beta(2^{-j}|x|)K^{\delta}(x), \quad j\geq1.$$
Let $T_{G}^j$ be the operator associated with kernel $K^{\delta}_j(x-y)\mathbf{1}_I(|\theta_1-\theta_2|)$ and let $q>4$.
Then by Proposition \ref{prop:TGj} and scaling, we know that for any $j\geqslant 1$
\begin{equation}\label{est:TGj}
\|T_{G}^j\|_{L^r\to L^q}\lesssim 2^{-j\left(\frac32-\delta\right)}2^{\frac{6j}q}
\end{equation}
if $\tfrac3q +\tfrac1r = 1$. And by duality, we know that 
$$\|T_{G}^j\|_{L^r\to L^q}\lesssim 2^{-j\left(\frac32-\delta\right)}2^{\frac{2j}q}$$ if $q>\frac{4}{3}$ and $\tfrac{1}{3q} + \tfrac1r = 1$.
	
Note that $q>4$. From the above two estimates applied to a characteristic function $f = \chi_E$, we obtain
\[ \| T^k_G \chi_E \|_q \leq C \min \left\{ 2^{-(3/2 - \delta)k} 2^{6k/q} |E|^{1 - 3/q}, \ 2^{-(3/2 - \delta)k} 2^{2k/q} |E|^{1 - 1/3q} \right\}. \]
Observe that the first term in the braces is smaller than the second precisely when $2^k < |E|^{2/3}$. Let $N$ be the integer such that $2^N < |E|^{2/3} \leq 2^{N+1}$. Then
\[ \sum_{k=1}^{\infty} \| T^k_G \chi_E \|_q \leq C \sum_{k=-\infty}^{N} 2^{k(-3/2 + \delta + 6/q)} |E|^{1 - 3/q} + C \sum_{k=N+1}^{\infty} 2^{k(-3/2 + \delta + 2/q)} |E|^{1 - 1/3q}. \]
Since $1/4 - \delta/6 < 1/q < 3/4 - \delta/2$, we have $-3/2 + \delta + 6/q > 0$, and $-3/2 + \delta + 2/q < 0$. So the two geometric series are convergent, and we obtain
\begin{align*}
\sum_{k=1}^{\infty} \|T^k_G \chi_E\|_q
&\leq C 2^{N(-3/2 + \delta + 6/q)} |E|^{1 - 3/q} + C 2^{(N+1)(-3/2 + \delta + 2/q)} |E|^{1 - 1/3q}\\
&\leq C |E|^{(2/3)(-3/2 + \delta + 6/q)} |E|^{1 - 3/q} + C |E|^{(2/3)(-3/2 + \delta + 2/q)} |E|^{1 - 1/3q}
\leq C |E|^{1/p}.
\end{align*}
Therefore, we conclude that 
$$\|T_G \chi_E\|_q \leq C \|\chi_E\|_p\quad \text{for}\quad (1/p,1/q)\in (BB'), q>4.$$
\end{proof}

Our main task is to prove Proposition \ref{prop:TGj} by making use of  the oscillatory integral theory by Stein \cite{Stein} and H\"ormander \cite{Hor}. We recall the following estimates by H\"ormander\cite[Theorem 1.4]{Hor} and also  Bak\cite[Appendix]{B}:

\begin{lemma}\label{lem:Bak's lemma}
Assume that $\psi$ is a smooth function with compact support away from $0$. For $\lambda \geq 1$, we define
\[ G_\lambda(f)(x) = \int_{\mathbb{R}^2} e^{2\pi i \lambda |x-y|} \psi(x-y) f(y) dy. \]
Then, there holds
\[ \|G_\lambda f\|_{L^q(\mathbb{R}^2)} \leq C \lambda^{-2/q} \|f\|_{L^r(\mathbb{R}^2)} \]
if $q > 4$ and $\tfrac3q + \tfrac1r = 1$.
\end{lemma}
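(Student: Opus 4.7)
The strategy is to reduce Proposition \ref{prop:TGj} to Lemma \ref{lem:Bak's lemma}, isolating the region where the sharp angular cutoff $\mathbf{1}_{I}(|\theta_1-\theta_2|)$ plays a nontrivial role and exploiting the fact that this cutoff depends only on angular variables.

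Since $\psi$ has compact support away from $0$, the kernel of $T^\lambda_G$ is supported in $\{|x-y|\sim 1\}$. Whenever $\max(|x|,|y|)\geq R_0$ for a sufficiently large constant $R_0$ (depending on $\mathrm{supp}\,\psi$), the angular deficit $|\theta_1-\theta_2|\lesssim R_0^{-1}$ is far below $\pi$, so $\mathbf{1}_{I}(|\theta_1-\theta_2|)\equiv 1$ and $T^\lambda_G$ coincides with $G_\lambda$ on this exterior region; Lemma \ref{lem:Bak's lemma} then yields the claim directly. It remains to treat the bounded region $|x|,|y|\leq R_0$. On this region I apply a smooth angular partition of unity $1=\sum_{j=1}^N\chi_j(\theta)$ on $\mathbb{S}^1$ at a fixed scale ($N$ independent of $\lambda$) and decompose $T^\lambda_G=\sum_{j,k}T^\lambda_{G,j,k}$ with $T^\lambda_{G,j,k}f(x)=\chi_j(\theta_1)T^\lambda_G(\chi_k(\theta_2)f)(x)$. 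For all but $O(1)$ pairs $(j,k)$ --- namely those for which $|\theta_1-\theta_2|$ does not cross $\pi$ on $\mathrm{supp}\,\chi_j\times\mathrm{supp}\,\chi_k$ --- the cutoff $\mathbf{1}_{I}(|\theta_1-\theta_2|)$ is identically $0$ or $1$, so $T^\lambda_{G,j,k}$ has smooth amplitude and Lemma \ref{lem:Bak's lemma} applies. Only the finitely many ``boundary pairs'', where $\theta_1$ and $\theta_2$ lie in nearly antipodal sectors, require a new argument.

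For a boundary pair, I would re-derive the $TT^*$-kernel estimate underlying Lemma \ref{lem:Bak's lemma} for the operator with the rough cutoff. The key structural observation is that $\mathbf{1}_{I}(|\theta_1-\theta_2|)$ depends only on angular variables, not on the radial variable $r_2=|y|$. Writing $y=r_2(\cos\theta_2,\sin\theta_2)$ and $z=r_3(\cos\theta_3,\sin\theta_3)$, the $TT^*$-kernel becomes
\[K(x,z)=\int_0^{\infty}\!\!\int_0^{2\pi}e^{i\lambda(|x-y|-|z-y|)}\psi(x-y)\overline{\psi(z-y)}\mathbf{1}_{I}(|\theta_1-\theta_2|)\mathbf{1}_{I}(|\theta_3-\theta_2|)\,r_2\,d\theta_2\,dr_2,\]
a function with smooth amplitude and phase in the variable $r_2$. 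Applying the $r_2$-stationary phase analysis exactly as in the free case yields the pointwise kernel decay $|K(x,z)|\lesssim(1+\lambda|x-z|)^{-1/2}$, uniformly in $\theta_2$. The rough cutoffs then only affect the outer $\theta_2$-integration, which runs over a bounded range and contributes only an $O(1)$ constant. The remainder of the Stein--Tomas/Carleson--Sj\"olin argument proceeds verbatim to yield the endpoint bound $\|T^\lambda_Gf\|_4\lesssim\lambda^{-1/2}\|f\|_4$; interpolation with the trivial bound $\|T^\lambda_Gf\|_\infty\leq\|\psi\|_\infty\|f\|_1$ then covers the full range $q>4$, $3/q+1/r=1$.

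The main difficulty will be the stationary-phase analysis in $r_2$ near the configurations where $x,y,z$ become collinear, so that $\partial_{r_2}(|x-y|-|z-y|)$ vanishes. This degeneracy is precisely the one controlled by the Carleson--Sj\"olin condition in the free case, and should be handled by a local splitting of the $r_2$-integral into a generic region (treated by integration by parts) and a degenerate region (treated by one-dimensional stationary phase around a non-degenerate critical point). The rough angular cutoff $\mathbf{1}_I$, being constant in $r_2$, is transparent to both procedures; this is the precise sense in which the H\"ormander estimate is ``stable'' under the indicator, and the reason the whole argument reduces cleanly to the free-amplitude estimate.
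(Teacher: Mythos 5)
Your writeup does not prove the lemma it is assigned to. Lemma~\ref{lem:Bak's lemma} is a statement about the \emph{free} oscillatory integral operator $G_\lambda$, whose amplitude $\psi$ is smooth and compactly supported away from the origin --- there is no rough angular indicator in it. This is the classical Carleson--Sj\"olin/H\"ormander oscillatory integral estimate in $\mathbb{R}^2$, and the paper does not reprove it: it is explicitly recalled from H\"ormander's Theorem~1.4 and the appendix of Bak's paper, so there is no ``paper's proof'' beyond that citation. Your argument, by contrast, is a proof sketch for Proposition~\ref{prop:TGj}, i.e.\ for the operator $T^\lambda_G$ carrying the sharp cutoff $\mathbf{1}_I(|\theta_1-\theta_2|)$, and it \emph{assumes} Lemma~\ref{lem:Bak's lemma} as a black box throughout: you invoke it on the exterior region, on the non-boundary sector pairs, and you speak of ``re-deriving the $TT^*$-kernel estimate underlying Lemma~\ref{lem:Bak's lemma}.'' Nowhere do you actually establish the $L^r\to L^q$ bound for $G_\lambda$ itself, which would require verifying the Carleson--Sj\"olin rank and curvature conditions for the phase $|x-y|$, carrying out the $L^4$ endpoint estimate via the oscillatory-integral machinery, and interpolating with the trivial $L^1\to L^\infty$ bound. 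That content is entirely absent, so as a proof of Lemma~\ref{lem:Bak's lemma} the proposal does not begin to engage the actual statement.

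As a secondary remark: even read as a proposal for Proposition~\ref{prop:TGj}, your approach is a genuinely different route from the paper's. The paper expands $\mathbf{1}_I(|\theta_1-\theta_2|)$ in a Fourier series in $\theta_1-\theta_2$, observes that each mode $e^{-ik(\theta_1-\theta_2)/2}$ factors as $e^{-ik\theta_1/2}\,e^{ik\theta_2/2}$ so that Lemma~\ref{lem:Bak's lemma} applies mode by mode at a $\log M\sim\log\lambda$ (hence $\lambda^\varepsilon$) cost, and controls the Fourier tail by a generalized Schur test --- avoiding any re-derivation of oscillatory-integral estimates. Your route isolates ``boundary'' angular sector pairs and would require re-running the $TT^*$ stationary phase in the presence of the cutoff; you correctly flag the degenerate collinear configurations as the difficulty, but leave exactly that point unresolved, and the claimed pointwise bound $|K(x,z)|\lesssim(1+\lambda|x-z|)^{-1/2}$ from a one-variable $r_2$-stationary phase alone is not justified. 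In any case, none of this is a proof of the lemma as stated.
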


Building upon Lemma \ref{lem:Bak's lemma}, Proposition \ref{prop:TGj} can be derived via a localized argument, as detailed in \cite{MYZ}. In the present work, however, we provide a significantly simpler proof, at the expense of an arbitrarily small loss in the power of $\lambda$—that is, a factor of $\lambda^\varepsilon$ for any $\varepsilon > 0$. It is straightforward to verify that such an $\varepsilon$-loss is acceptable within the proof of our main theorem.

The core of our approach lies in establishing a “stability lemma” for the underlying oscillatory integral operator. More precisely, compared with Lemma \ref{lem:Bak's lemma}, the kernel of $T_G^{\lambda}$ contains an additional jump function $\mathbf{1}_{[0,\pi]}(|\theta_1-\theta_2|)$. As a result, Proposition \ref{prop:TGj} may be viewed as a variant of the stability lemma adapted to this rough setting. To prove Proposition \ref{prop:TGj}, we note that
\[\mathbf{1}_{[0,2\pi]}(|\theta_1-\theta_2|)=\sum_{k\in\mathbb{Z}}\frac{\sin \left(\frac{k}{2}\pi\right)}{\pi k}e^{-ik(\theta_1-\theta_2)/2}.\]
Let $M$ be a free parameter to be chosen later, we have
\begin{align*}
	T_G^{\lambda}f(x)
	&=\sum_{|k|<M}\frac{\sin \left(\frac{k}{2}\pi\right)}{\pi k}e^{-ik\theta_1/2}\int_{\mathbb{R}^2} e^{2\pi i \lambda |x-y|} \psi(x-y)e^{ik\theta_2/2} f(y) dy+\int_{\mathbb{R}^2} R_M(x,y)f(y)dy,
\end{align*}
where
\[R_M(r_1,r_2;\theta_1,\theta_2):=e^{2\pi i \lambda |x-y|} \psi(x-y)\Big[\mathbf{1}_{I}(|\theta_1-\theta_2|)-\sum_{|k|<M}\frac{\sin \left(\frac{k}{2}\pi\right)}{\pi k}e^{-ik(\theta_1-\theta_2)/2}\Big].\]
Note that by Hausdorff-Young's inequality, we have
\[\bigg\|\mathbf{1}_{I}(|\theta|)-\sum_{|k|<M}\frac{\sin \left(\frac{k}{2}\pi\right)}{\pi k}e^{-ik\theta/2}\bigg\|_{L^p(-2\pi,2\pi)}\lesssim M^{-1/p}\]
for $2\leqslant p<+\infty$. By generalized Schur's test, we may take $M:=[\lambda^{C_{q,r}}]$ for some $C_{q,r}>0$ such that
\[\left\|\int_{\mathbb{R}^2} R_M(x,y)f(y)dy\right\|_{L^q(\mathbb{R}^2)}\lesssim \lambda^{-2/q}\|f\|_{L^r(\mathbb{R}^2)}.\]
And by Lemma \ref{lem:Bak's lemma} and the triangle inequality we have
\begin{align*}
	&\bigg\|\sum_{|k|<M}\frac{\sin \left(\frac{k}{2}\pi\right)}{\pi k}e^{-ik\theta_1/2}\int_{\mathbb{R}^2} e^{2\pi i \lambda |x-y|} \psi(x-y)e^{ik\theta_2/2} f(y) dy\bigg\|_{L^q(\mathbb{R}^2)}\\
	\lesssim& \log M\cdot \lambda^{-2/q}\|f\|_{L^r(\mathbb{R}^2)}\lesssim \lambda^{-2/q+\varepsilon}\|f\|_{L^r(\mathbb{R}^2)}.
\end{align*}
 Therefore, we finish the proof of  Proposition \ref{prop:TGj}.

\section{The estimation of $T_{D^{\pm}}$}\label{sec:TD}
This section is devoted to proving the bound
\begin{equation}\label{equ:td1red}
\|T_{D^\pm} \chi_E\|_{L^{q}(\mathbb{R}^2)} \leq C \|\chi_E\|_{L^{p}(\mathbb{R}^2)}
\end{equation}
for $(1/p, 1/q) \in (BB')$ and $q > 4$. The argument, though more complicated, is elementary.  We proceed by adapting the strategy developed in \cite{FZZ1}.

As before, we only consider the $+$ case. To light the notation, we replace ${D^\pm}$ by $D$ and only consider the $+$ case. Recall
\begin{equation}
\begin{split}
D(r_1,\theta_1;r_2,\theta_2)&= \frac1{\pi}\int_0^\infty e^{i|{\bf n}_s|} |{\bf n}_s|^{-\frac{3}{2}+\delta} a(|{\bf n}_s|)\, B_\alpha(s,\theta_1,\theta_2)\;ds
\end{split}
\end{equation}
for some smooth function $a$ satisfies 
\[\Big|\big(\tfrac{\partial}{\partial r}\big)^Na(r)\Big|\leqslant C_n r^{-N},\;r>0,\quad\forall N\geqslant 0.\]

It suffices to prove the three estimates, for $\ell=1,2,3$ and $(\frac{1}{p},\frac{1}{q})\in (BB')$ and $q>4$
\begin{equation}\label{equ:kdellred}
\Big\|\int_0^\infty \int_0^{2\pi } K^{\ell}_D(r_1,r_2;\theta_1-\theta_2) \chi_E(r_2,\theta_2) r_2 dr_2 d\theta_2\Big\|_{L^q(\mathbb{R}^2)}\leq C\|\chi_E\|_{L^p(\mathbb{R}^2)},
\end{equation}
where
\begin{align}\label{kerD12}
K^1_D(r_1,r_2;\theta_1-\theta_2)&=\int_0^\infty e^{ i|{\bf n}_s|} |{\bf n}_s|^{-\frac32+\delta}  a(|{\bf n}_s|)\, e^{-|\alpha|s}\;ds,\\\nonumber
K^2_D(r_1,r_2;\theta_1-\theta_2)&=\int_0^\infty e^{ i|{\bf n}_s|} |{\bf n}_s|^{-\frac32+\delta}  a(|{\bf n}_s|) \frac{(e^{-s}-\cos(\theta_1-\theta_2+\pi))\sinh(\alpha s)}{\cosh(s)-\cos(\theta_1-\theta_2+\pi)}\;ds,
\end{align}
and
\begin{equation}\label{kerD3}
\begin{split}
K^3_D(r_1,r_2;\theta_1-\theta_2)&=\int_0^\infty e^{ i|{\bf n}_s|} |{\bf n}_s|^{-\frac32+\delta}  a(|{\bf n}_s|)\frac{\sin(\theta_1-\theta_2+\pi)\cosh(\alpha s)}{\cosh(s)-\cos(\theta_1-\theta_2+\pi)}\;ds.
\end{split}
\end{equation}
Using the partition of unity again
$$\beta_0(r)=1-\sum_{j\geq1}\beta(2^{-j}r),\quad \beta\in\mathcal{C}_c^\infty\big(\big[\tfrac38,\tfrac43\big]\big),$$ we decompose
\begin{equation}\label{KDlj}
K^{\ell}_D(r_1,r_2;\theta_1-\theta_2)=:\sum_{j\geq 0}K^{\ell,j}_D(2^jr_1,2^jr_2;\theta_1-\theta_2),
\end{equation}
where for $j\geq1$ and $\ell=1,2,3$
$$K_D^{\ell,j}(r_1,r_2;\theta_1-\theta_2)=\beta(r_1+r_2)K^\ell_D(2^jr_1,2^jr_2;\theta_1-\theta_2)$$
and $K_D^{\ell,0}(r_1,r_2;\theta_1-\theta_2)=\beta_0(r_1+r_2)K^\ell_D(r_1,r_2;\theta_1-\theta_2)$. For our purpose, we need the following proposition, which is an analogue of \eqref{est:TGj}.

\begin{proposition}\label{prop:TDj}
For $\ell=1,2,3$, let $T_{D}^{\ell,j}$ be the operator associated with kernel $K_{D}^{\ell,j}(r_1,r_2;\theta_1-\theta_2)$ given in \eqref{KDlj}. Then, for $j\geq0$, it holds
\begin{equation}\label{est:TDj}
\begin{split}
\|T_{D}^{\ell,j}\|_{L^r\to L^q}&\leq C 2^{-j\left(3/2-\delta\right)}2^{-2j/q}
\end{split}
\end{equation}
for $q>4$ and $\tfrac3q +\tfrac1r=1$.
\end{proposition}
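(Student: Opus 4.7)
My plan is to prove Proposition \ref{prop:TDj} by a stationary-phase analysis in the $s$-variable, adapting the strategy of \cite{FZZ1}. The proof proceeds in three steps.

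First, I extract the amplitude decay. On the support of $\beta$ we have $r_1+r_2\sim 1$, so $|\mathbf{n}_s(2^jr_1,2^jr_2)|=2^j|\mathbf{n}_s(r_1,r_2)|\gtrsim 2^j$ uniformly in $s\geq 0$. The factor $(2^j|\mathbf{n}_s|)^{-3/2+\delta}a(2^j|\mathbf{n}_s|)$ is therefore bounded by $2^{-j(3/2-\delta)}|\mathbf{n}_s|^{-3/2+\delta}$, producing the $2^{-j(3/2-\delta)}$ prefactor in the target bound. Setting $\lambda=2^j$, the remaining task is to prove an $L^r\to L^q$ bound of $\lesssim \lambda^{-2/q}$ for the rescaled oscillatory operator whose kernel is
\[
\beta(r_1+r_2)\int_0^\infty e^{i\lambda|\mathbf{n}_s(r_1,r_2)|}\,c_s(r_1,r_2)\,B_\alpha(s,\theta_1,\theta_2)\,ds,
\]
with $c_s$ a smooth amplitude uniformly bounded in $s$.

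Second, I analyze the $s$-integral. The phase $\Phi_s(r_1,r_2)=\sqrt{(r_1+r_2)^2+4r_1r_2\sinh^2(s/2)}$ attains a non-degenerate minimum at $s=0$, with $\Phi_0=r_1+r_2$ and $\partial_s^2\Phi_s|_{s=0}=r_1r_2/(r_1+r_2)\sim 1$ on the support of $\beta$. For $\ell=1$ the weight $B_\alpha=e^{-|\alpha|s}$ is smooth, and a standard van der Corput argument produces a pointwise kernel bound of $\lambda^{-1/2}$. For $\ell=2,3$ the weight $B_\alpha(s,\theta_1,\theta_2)$ is smooth in $s$ for $\theta_1-\theta_2\neq \pi$, but develops singular $s$-derivatives as $\theta_1-\theta_2\to \pi$, precisely because the denominator $\cosh s-\cos(\theta_1-\theta_2+\pi)$ vanishes at $(s,\theta_1-\theta_2)=(0,\pi)$. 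I therefore split the angular variable: on the \emph{smooth region} $|\theta_1-\theta_2-\pi|\gtrsim \lambda^{-1/2}$ stationary phase yields $|\tilde K|\lesssim \lambda^{-1/2}$, while on the \emph{singular region} $|\theta_1-\theta_2-\pi|\lesssim \lambda^{-1/2}$ the uniform $L^1_s$ bounds \eqref{equ:ream3}-\eqref{equ:ream4} give only the trivial $|\tilde K|\lesssim 1$.

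Third, I combine these with Young's inequality on $\mathbb S^1$. Writing the operator as an angular convolution with a radial weight and using Minkowski's integral inequality to separate the angular and radial parts, the relevant angular Young exponent is $a=q/4$ (from $1/q+1=1/a+1/r$ with $3/q+1/r=1$). The smooth-region contribution carries the $L^\infty$ bound $\lambda^{-1/2}$, which is stronger than $\lambda^{-2/q}$ for $q\geq 4$. The singular-region contribution is handled by interpolating $\|\tilde K_{\mathrm{sing}}\|_{L^\infty(\mathbb S^1)}\lesssim 1$ with $\|\tilde K_{\mathrm{sing}}\|_{L^1(\mathbb S^1)}\lesssim \lambda^{-1/2}$ (the width of the singular strip), which yields $\|\tilde K_{\mathrm{sing}}\|_{L^{q/4}(\mathbb S^1)}\lesssim \lambda^{-2/q}$---exactly the target. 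The remaining radial $L^r\to L^q$ estimate on the compact region $\{r_1+r_2\sim 1\}$ is a standard Schur/Young bound, and assembling everything gives $\|T_D^{\ell,j}\|_{L^r\to L^q}\lesssim 2^{-j(3/2-\delta)-2j/q}$.

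The main obstacle is the $\ell=2,3$ case: the blow-up of $\partial_s B_\alpha$ as $\theta_1-\theta_2\to \pi$ must be balanced against the small angular measure of the singular region, and identifying the correct threshold $\lambda^{-1/2}$---together with the $L^\infty$/$L^1$ interpolation on $\mathbb S^1$ that converts the non-oscillatory $|\tilde K|\lesssim 1$ into the sharp $\lambda^{-2/q}$ decay---is the step where the careful pointwise kernel analysis of \cite{FZZ1} is imported.
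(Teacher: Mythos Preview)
Your approach contains a concrete gap in Step~2. On the support of $\beta$ one only has $r_1+r_2\sim 1$; the product $r_1r_2$ is \emph{not} bounded below (take $r_1\to 0$, $r_2\sim 1$). Hence your claim $\partial_s^2\Phi_s|_{s=0}=r_1r_2/(r_1+r_2)\sim 1$ is false, and the van~der~Corput bound is $(1+\lambda r_1r_2)^{-1/2}$, not $\lambda^{-1/2}$. This radial degeneracy propagates to Step~3: once the angular part carries the weight $(1+\lambda r_1r_2)^{-1/2}$, the ``remaining radial $L^r\to L^q$ estimate'' is no longer a standard Schur/Young bound, because the kernel $\beta(r_1+r_2)(r_1r_2)^{-1/2}$ is singular on the edges $r_1=0$ and $r_2=0$ of the strip $\{r_1+r_2\sim 1\}$. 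The paper imports exactly this ingredient from \cite{FZZ1} as Proposition~\ref{lem:kerlplq}, which shows that a kernel obeying $|K|\lesssim (1+2^jr_1r_2)^{-1/2}\beta(r_1+r_2)$ maps $L^p\to L^q$ with norm $\lesssim 2^{-2j/q}$ for $q>4$, $q>p'$; this is what replaces your ``standard'' claim.

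Your angular strategy also diverges from the paper. You split $\theta_1-\theta_2$ into a smooth region and a singular strip of width $\lambda^{-1/2}$, accepting only the trivial bound on the strip and recovering $\lambda^{-2/q}$ via $L^\infty/L^1$ interpolation and Young on $\mathbb{S}^1$. The paper instead proves in Lemma~\ref{lem:ker-est12} that the pointwise bound $(1+2^jr_1r_2)^{-1/2}$ holds \emph{uniformly in $\theta_1-\theta_2$}, including at $\theta_1-\theta_2=\pi$. For $\ell=3$ this requires isolating an explicit model kernel $H$ (see \eqref{def-H}) and invoking an argument from \cite{BFM18}. The paper's route buys a clean pointwise estimate that feeds directly into Proposition~\ref{lem:kerlplq}; your route avoids the delicate $\ell=3$ endpoint analysis but, as written, needs the threshold to be $(\lambda r_1r_2)^{-1/2}$ rather than $\lambda^{-1/2}$, after which the angular Young step and the radial step become entangled and you are essentially forced back to something like Proposition~\ref{lem:kerlplq} anyway.
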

With  the Proposition \ref{prop:TDj} in hand, we obtain \eqref{equ:td1red} by using the same argument in the proof of \eqref{equ:tg1red} from Proposition \ref{prop:TGj},
see the details after Proposition \ref{prop:TGj}.

\vspace{0.2cm}

For $j=0$, from \eqref{equ:ream2}, \eqref{equ:ream3} and \eqref{equ:ream4}, we have
$$|K_D^{\ell,0}(r_1,r_2;\theta_1-\theta_2)|\leq \beta_0(r_1+r_2).$$
Then we conclude $ \|T_{D}^{\ell,0}\|_{L^r\to L^q}\leq C$ by generalized Schur’s test. And for $j\geq1$, we need the following proposition.
\begin{proposition}[\cite{FZZ1}]\label{lem:kerlplq}
Let $T_K$ be defined by
$$T_Kf(r_1,\theta_1):=\int_0^\infty \int_{0}^{2\pi} K(r_1,r_2,\theta_1,\theta_2)f(r_2,\theta_2)\;d\theta_2\;r_2\;dr_2,$$
and the kernel $K(r_1,r_2,\theta_1,\theta_2)$ satisfies
\begin{equation}\label{equ:kercond}
|K(r_1,r_2,\theta_1,\theta_2)|\lesssim \big(1+2^jr_1r_2\big)^{-\frac12}\beta(r_1+r_2).
\end{equation}
Then, there holds
\begin{equation}\label{equ:tkpqest}
\big\|T_Kf\big\|_{L^q(\mathbb{R}^2)}\lesssim 2^{-\frac{2j}{q}}\|f\|_{L^p(\mathbb{R}^2)}
\end{equation}
for $q>4$ and $q>p'$.
\end{proposition}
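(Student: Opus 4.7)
The plan is to reduce the proposition to a one-dimensional weighted integral estimate, exploiting the fact that the pointwise bound on $|K|$ in \eqref{equ:kercond} has no angular dependence. Applying the triangle inequality in $\theta_1$ and H\"older's inequality in $\theta_2$, I would obtain
\[|T_K f(r_1,\theta_1)|\lesssim \int_0^{\infty}K_0(r_1,r_2)F(r_2)\,r_2\,dr_2,\]
where $K_0(r_1,r_2):=(1+2^jr_1r_2)^{-1/2}\beta(r_1+r_2)$ and $F(r_2):=\|f(r_2,\cdot)\|_{L^p([0,2\pi])}$. Since the right-hand side is independent of $\theta_1$, and since $\|F\|_{L^p((0,\infty),r\,dr)}=\|f\|_{L^p(\mathbb{R}^2)}$ by Fubini, the claim reduces to the one-dimensional bound
\[\|TF\|_{L^q((0,\infty),r\,dr)}\lesssim 2^{-2j/q}\|F\|_{L^p((0,\infty),r\,dr)},\qquad TF(r_1):=\int_0^{\infty}K_0(r_1,r_2)F(r_2)\,r_2\,dr_2.\]

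Next I would carry out a dyadic decomposition of both radial variables. On the support of $\beta(r_1+r_2)$ we have $r_1+r_2\sim 1$, which forces $\max(r_1,r_2)\sim 1$; writing $r_1\sim 2^{-a}$ and $r_2\sim 2^{-b}$ with $a,b\geq 0$ and $\min(a,b)=0$, the kernel satisfies $K_0\lesssim \min(1,2^{-(j-a-b)/2})$. Putting $F$ into $L^p$ on the strip $\{r_2\sim 2^{-b}\}$ via H\"older's inequality and measuring the $L^q$ norm on the strip $\{r_1\sim 2^{-a}\}$, the localized piece $T^{a,b}$ obeys
\[\|T^{a,b}F\|_{L^q(r\,dr)}\lesssim\min\bigl(1,2^{-(j-a-b)/2}\bigr)\,2^{-2a/q}\,2^{-2b/p'}\|F\|_{L^p(r\,dr)}.\]
Summing this estimate over $(a,b)$ with $\min(a,b)=0$ would then yield the desired one-dimensional bound.

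The main point---and the only place the hypotheses $q>4$ and $q>p'$ enter---is the convergence of the resulting geometric series. In the case $b=0$, one splits the sum at $a=j$:
\[\sum_{a\leq j}2^{-(j-a)/2}\,2^{-2a/q}+\sum_{a>j}2^{-2a/q}.\]
The second tail is $\sim 2^{-2j/q}$, while the first equals $2^{-j/2}\sum_{a=0}^{j}2^{a(1/2-2/q)}$ and is controlled by its last term precisely when $1/2-2/q>0$, i.e.\ $q>4$, again giving $\sim 2^{-2j/q}$. In the symmetric case $a=0$, the analogous sum is $\lesssim\max(2^{-j/2},2^{-2j/p'})$, and both of these are $\lesssim 2^{-2j/q}$ by $q>4$ and $q>p'$ respectively. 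The main technical hurdle is therefore the careful bookkeeping between the two regimes $a\leq j$ versus $a>j$ (and likewise for $b$), and the observation that the endpoint conditions $q>4$ and $q>p'$ are exactly what the summation demands.
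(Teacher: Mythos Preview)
Your argument is correct. Note that the paper does not include its own proof of this proposition; it is quoted from \cite{FZZ1}, so there is no in-paper argument to compare against. The approach you give---absorbing the angular variables via the $\theta$-independent pointwise bound to reduce matters to a one-dimensional radial operator with kernel $K_0(r_1,r_2)=(1+2^jr_1r_2)^{-1/2}\beta(r_1+r_2)$, then dyadically decomposing $r_1,r_2$ on the strip $r_1+r_2\sim 1$ and summing the resulting geometric series---is exactly the Schur-type argument one expects (and is essentially what \cite{FZZ1} does). The hypotheses $q>4$ and $q>p'$ enter precisely where you indicate, to make the two geometric sums converge with the right power $2^{-2j/q}$. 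Two cosmetic remarks: the first reduction is not ``the triangle inequality in $\theta_1$'' but simply the observation that the majorant $K_0$ is $\theta_1$-independent; and strictly speaking the dyadic condition is $\min(a,b)=O(1)$ rather than $\min(a,b)=0$, which of course changes nothing in the summation.
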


To prove \eqref{est:TDj}, it suffices to show the kernels $2^{j\left(\frac{3}{2}-\delta\right)}K_D^{\ell,j}(r_1,r_2;\theta_1-\theta_2)$ satisfy \eqref{equ:kercond}.

\begin{lemma}\label{lem:ker-est12}
 For  $j\geq1$ and $l=1,2,3$, then there holds
\begin{equation}\label{kerD-est}
\begin{split}
|K_{D}^{l,j}(r_1,r_2;\theta_1-\theta_2)|\lesssim 2^{-j\left(\frac{3}{2}-\delta\right)}\big(1+2^jr_1r_2\big)^{-\frac12}.
\end{split}
\end{equation}
\end{lemma}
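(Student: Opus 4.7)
Set $\lambda=2^j$ and $|\tilde{\mathbf{n}}_s|:=\sqrt{r_1^2+r_2^2+2r_1r_2\cosh s}$; on the support of $\beta(r_1+r_2)$ one has $r_1+r_2\sim 1$. Pulling out the scaling factor $\lambda^{-3/2+\delta}$ from the amplitude, the claim \eqref{kerD-est} reduces to the uniform bound
\[|I_\ell|\lesssim (1+\lambda r_1 r_2)^{-1/2},\qquad I_\ell := \int_0^\infty e^{i\lambda|\tilde{\mathbf{n}}_s|}|\tilde{\mathbf{n}}_s|^{-\frac{3}{2}+\delta}a(\lambda|\tilde{\mathbf{n}}_s|)W_\ell(s,\theta_1-\theta_2)\,ds,\]
where $W_\ell$ denotes the angular weight from \eqref{kerD12}--\eqref{kerD3}. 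In the low-frequency regime $\lambda r_1 r_2\leq 1$, the trivial $L^1$-bounds \eqref{equ:ream2}--\eqref{equ:ream4} together with $|\tilde{\mathbf{n}}_s|^{-3/2+\delta}\lesssim 1$ for bounded $s$ and its exponential decay for large $s$ (since $\delta<3/2$ and $|\tilde{\mathbf{n}}_s|\gtrsim \sqrt{r_1 r_2}\,e^{s/2}$) yield $|I_\ell|\lesssim 1$ uniformly in $\theta_1-\theta_2$.

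In the high-frequency regime $\lambda r_1 r_2\ge 1$, oscillation must be extracted. The phase $\Phi(s)=\lambda|\tilde{\mathbf{n}}_s|$ satisfies $\Phi'(0)=0$ and $\Phi''(0)=\lambda r_1 r_2/(r_1+r_2)\sim \lambda r_1 r_2$, so boundary stationary phase at $s=0$ predicts decay $(\lambda r_1 r_2)^{-1/2}$. To implement this, the change of variables $v=|\tilde{\mathbf{n}}_s|$ converts $I_\ell$ into
\[I_\ell=\int_{r_1+r_2}^\infty e^{i\lambda v}\,v^{-\frac{1}{2}+\delta}a(\lambda v)\,\widetilde{W}_\ell(v,\cdot)\,\frac{2\,dv}{\sqrt{(v^2-(r_1+r_2)^2)(v^2-(r_1-r_2)^2)}},\]
in which the Jacobian has a $(v-(r_1+r_2))^{-1/2}(r_1 r_2)^{-1/2}$ singularity at the lower endpoint. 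For $\ell=1$, the pull-back $\widetilde{W}_1$ is smooth and rapidly decaying, and the classical one-dimensional estimate $\bigl|\int_{v_0}^\infty e^{i\lambda v}g(v)(v-v_0)^{-1/2}\,dv\bigr|\lesssim \lambda^{-1/2}$ (for smooth $g$ with rapid decay) delivers $|I_1|\lesssim (\lambda r_1 r_2)^{-1/2}$ at once.

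The main obstacle is the case $\ell=2,3$, where $W_\ell$ develops singularities of magnitude $|\sin\phi|/[2(\sinh^2(s/2)+\sin^2(\phi/2))]$ (with $\phi=\theta_1-\theta_2+\pi$) as $\phi\to 0$, via the identity $\cosh s-\cos\phi=2(\sinh^2(s/2)+\sin^2(\phi/2))$. Following the strategy of \cite{FZZ1}, I would introduce the further substitution $t=\sinh(s/2)$ and dyadically decompose the $t$-integral into shells whose sizes are compared with both the stationary scale $(\lambda r_1 r_2)^{-1/2}$ and the angular scale $|\sin(\phi/2)|$. On shells where the phase varies significantly, integration by parts in $t$ using the growth of $\Phi'$ extracts the desired gain; on shorter shells the small prefactor $\sin\phi$ (respectively $e^{-s}-\cos\phi$) in the numerator is used to offset the denominator singularity, keeping the trivial contribution acceptable. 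Summing the dyadic pieces yields the required uniform bound in $\phi$, completing the proof.
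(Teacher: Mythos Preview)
Your treatment of the low-frequency case and of $\ell=1$ matches the paper in spirit: the paper applies Van der Corput directly in $s$ (using $|\partial_s^2|\mathbf n_s||\gtrsim r_1r_2$ on $[0,1]$, $|\partial_s|\mathbf n_s||\gtrsim r_1r_2$ on $[1,\infty)$, together with $\int|\psi_1'|<\infty$), whereas you pass to $v=|\tilde{\mathbf n}_s|$ and invoke the endpoint $(v-v_0)^{-1/2}$ oscillatory estimate. These are interchangeable.

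For $\ell=2,3$ the routes diverge. The paper does \emph{not} run a dyadic-shell argument; it subtracts from $W_\ell$ an explicit rational model in $s$ with parameter $b=\sqrt2\sin(\phi/2)$, verifies (quoting the appendices of \cite{FZZ1} and \cite{MYZ}) that the difference has uniformly bounded $s$-derivative so that Van der Corput applies, and then handles the model term: for $\ell=2$ by an elementary splitting plus Van der Corput, and for $\ell=3$ by applying the Morse Lemma to make the phase exactly quadratic and then appealing to the explicit analysis in \cite{BFM18} of $\int_0^\infty e^{i\mu s^2}\,b/(s^2+b^2)\,ds$.

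Your dyadic plan has a genuine gap precisely at this $\ell=3$ endpoint. When $b\sim|\sin(\phi/2)|\ll(\lambda r_1r_2)^{-1/2}$, the Poisson-type weight $b/(t^2+b^2)$ places essentially all of its mass on the single shell $t\sim b$, which lies strictly inside the stationary zone $\{t\lesssim(\lambda r_1r_2)^{-1/2}\}$. On that shell the phase varies by only $\lambda r_1r_2\,b^2\ll 1$, so integration by parts gains nothing, while the trivial bound gives $(1/b)\cdot b\sim 1$, not $(\lambda r_1r_2)^{-1/2}$. In other words, the ``small prefactor $\sin\phi$'' is exactly the numerator of an approximate identity and does \emph{not} offset the denominator singularity---it collapses onto the stationary point. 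A generic shell-by-shell IBP scheme therefore cannot yield the uniform bound; the paper isolates this contribution as the model integral and defers to the specific computation in \cite{BFM18}. You will need that argument (or an equivalent explicit analysis of the model integral) to close the $\ell=3$ case.
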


\begin{proof}
To prove this, for fixed $r_1, r_2$, $\theta_1,\theta_2$ and $j\geq1$, we define
\begin{equation}
\begin{split}
\psi_1(r_1,r_2,\theta_1,\theta_2; s)&=|{\bf n}_s|^{-3/2+\delta} a(2^j|{\bf n}_s|)\, e^{-|\alpha|s},\\
\psi_2(r_1,r_2,\theta_1,\theta_2; s)&=|{\bf n}_s|^{-3/2+\delta} a(2^j|{\bf n}_s|)\, \frac{(e^{-s}-\cos(\theta_1-\theta_2+\pi))\sinh(\alpha s)}{\cosh(s)-\cos(\theta_1-\theta_2+\pi)},\\
\psi_3(r_1,r_2,\theta_1,\theta_2; s)&=|{\bf n}_s|^{-3/2+\delta} a(2^j|{\bf n}_s|)\, \frac{\sin(\theta_1-\theta_2+\pi)\cosh(\alpha s)}{\cosh(s)-\cos(\theta_1-\theta_2+\pi)}.
\end{split}
\end{equation}
Then by the definition, we have
\begin{equation}\label{KDl}
\begin{split}
K_D^{\ell,j}(r_1,r_2;\theta_1-\theta_2)&=2^{-j\left(\frac{3}{2}-\delta\right)}\beta(r_1+r_2) \int_0^\infty e^{i2^j|{\bf n}_s|} \psi_\ell(s)\;ds,\quad \ell=1,2,3.
\end{split}
\end{equation}
	
\textbf{The case $\ell=1$.} If $2^jr_1r_2\lesssim1$,  then \eqref{kerD-est} follows by using \eqref{equ:ream1}, \eqref{equ:ream2} and \eqref{equ:ream3}.
Thus, from now on, we assume $2^jr_1r_2\geq 1$ in the proof.
We first compute that
\begin{equation}
\begin{split}
\partial_s |{\bf n}_s|&=\frac{r_1r_2\sinh s}{(r_1^2+r_2^2+2r_1r_2\cosh s)^{1/2}}\\
\partial^2_s|{\bf n}_s|&=\frac{r_1r_2\cosh s}{(r_1^2+r_2^2+2r_1r_2\cosh s)^{1/2}}-\frac{(r_1r_2\sinh s)^2}{(r_1^2+r_2^2+2r_1r_2\cosh s)^{3/2}},
\end{split}
\end{equation}
therefore we obtain, for $0\leq s\leq 1$
\begin{equation}
\partial_s |{\bf n}_s|(0)=0,\quad \big|\partial^2_s |{\bf n}_s|\big|\geq c \frac{r_1r_2}{r_1+r_2} \gtrsim r_1r_2,
\end{equation}
and for $s\geq 1$
\begin{equation}
\partial_s|{\bf n}_s|\geq c \frac{r_1r_2}{r_1+r_2} \gtrsim r_1r_2.
\end{equation}
One can verify that $\partial_s|{\bf n}_s|$ is monotonic on the interval $[1,\infty)$ and the facts that
\begin{equation}\label{psi-b-1}
\int_0^\infty |\psi_1'(s)|ds\lesssim 1.
\end{equation}
Indeed, if the derivative hits on $|{\bf n}_s|^{-1/2} a(2^j|{\bf n}_s|)$ which is bounded, we again use \eqref{equ:ream1}, \eqref{equ:ream2} and \eqref{equ:ream3} to obtain \eqref{psi-b-1}. If the derivative hits on $e^{-|\alpha|s}$, it brings harmlessness. By using Van der Corput Lemma, thus we prove
\begin{equation}
\begin{split}
|K_{D}^{1,j}(r_1,r_2;\theta_1-\theta_2)|\lesssim 2^{-j\left(\frac{3}{2}-\delta\right)}\big(2^jr_1r_2\big)^{-\frac12}.
\end{split}
\end{equation}
	
\textbf{The case $\ell=2$.} We can bound $\int_1^{\infty} e^{i2^j|{\bf n}_s|} \psi_2(s)\;ds$ as above. However, we need more argument to bound $\int_0^1 e^{i2^j|{\bf n}_s|} \psi_2(s)\;ds$. To this aim, when $s$ is close to $0$, we replace
\begin{equation*}
\begin{split}
\frac{(e^{-s}-\cos(\theta_1-\theta_2+\pi))\sinh(\alpha s)}{\cosh(s)-\cos(\theta_1-\theta_2+\pi)}\sim \frac{(-s+b^2)(\alpha s)}{\frac{s^2}2+b^2}
\end{split}
\end{equation*}
where $b=\sqrt{2}\sin\big(\frac{\theta_1-\theta_2+\pi}2\big)$.
Then for $0\leq s\leq 1$,  uniformly in $b$, we have
\begin{equation}\label{d-v1}
\begin{split}
\Big|\partial_s^{k}\Big(\frac{(e^{-s}-\cos(\theta_1-\theta_2+\pi))\sinh(\alpha s)}{\cosh(s)-\cos(\theta_1-\theta_2+\pi)}-\frac{(-s+b^2)(\alpha s)}{\frac {s^2}2+b^2}\Big)\Big|\lesssim 1,\quad k=0,1
\end{split}
\end{equation}
which has been proved in the appendix of \cite{FZZ1}.
	
Therefore  the difference term is accepted by using Van der Corput Lemma again. Hence, we need to control
\begin{equation}\label{psi-2'1}
\begin{split}
\int_0^1 e^{i2^j|{\bf n}_s|}|{\bf n}_s|^{-3/2+\delta} a(2^j|{\bf n}_s|)\frac{(-s+b^2)(\alpha s)}{\frac{s^2}2+b^2}\;ds
\end{split}
\end{equation}
which is bounded by
\begin{equation}
\begin{split}
& \bigg(\Big|\int_0^1 e^{i2^j|{\bf n}_s|} |{\bf n}_s|^{-3/2+\delta} a(2^j|{\bf n}_s|)\;ds\Big|+\Big|\int_0^1 e^{i2^j|{\bf n}_s|} |{\bf n}_s|^{-3/2+\delta} a(2^j|{\bf n}_s|)\, \frac{b^2}{\frac{s^2}2+b^2}\;ds\Big|\\
&+\Big|\int_0^1 e^{i2^j|{\bf n}_s|} |{\bf n}_s|^{-3/2+\delta} a(2^j|{\bf n}_s|)\, \frac{sb^2}{\frac{s^2}2+b^2}\;ds\Big|\bigg).
\end{split}
\end{equation}
Now we can bound
\begin{equation}
\begin{split}
& \Big(\int_0^1 \Big|\partial_s\big(|{\bf n}_s|^{-3/2+\delta} a(2^j|{\bf n}_s|)\big)\Big|\;ds+\Big|\int_0^1 \partial_s\Big(|{\bf n}_s|^{-3/2+\delta} a(2^j|{\bf n}_s|)\, \frac{b^2}{\frac{s^2}2+b^2}\Big)\;ds\Big|\\
&+\Big|\int_0^1\partial_s\Big(|{\bf n}_s|^{-3/2+\delta} a(2^j|{\bf n}_s|)\, \frac{sb^2}{\frac{s^2}2+b^2}\Big)\;ds\Big|\Big)\lesssim 1.
\end{split}
\end{equation}
Then by using Van der Corput Lemma again, we obtain
\begin{equation}
\begin{split}
\Big|\int_0^1 e^{i2^j|{\bf n}_s|}|{\bf n}_s|^{-1/2} a(2^j|{\bf n}_s|)\frac{(-s+b^2)(\alpha s)}{\frac{s^2}2+b^2}\;ds\Big|\lesssim  (2^jr_1r_2)^{-\frac12}.
\end{split}
\end{equation}
	
\textbf{The case $\ell=3$.} Now we are left to consider $K_D^{3,j}$ which is more complicated.
To this end, we define
\begin{equation}\label{psi-m}
\begin{split}
\psi_{3,m}(r_1,r_2,\theta_1,\theta_2; s)&=(r_1+r_2)^{-3/2+\delta} a(2^j(r_1+r_2))\, \frac{\sin(\theta_1-\theta_2+\pi)}{\frac{s^2}2+b^2},
\end{split}
\end{equation}
and $\psi_{3,e}=\psi_{3}-\psi_{3,m}$.
We further define the kernels
\begin{equation}\label{KDe3}
\begin{split}
K_{D,m}^{3,j}(r_1,r_2;\theta_1-\theta_2)&=2^{-j\left(\frac{3}{2}-\delta\right)}\beta(r_1+r_2) \int_0^\infty e^{i2^j|{\bf n}_s|} \psi_{3, m}(s)\;ds,\\
K_{D,e}^{3,j}(r_1,r_2;\theta_1-\theta_2)&=2^{-j\left(\frac{3}{2}-\delta\right)}\beta(r_1+r_2) \int_0^\infty e^{i2^j|{\bf n}_s|} \psi_{3, e}(s)\;ds,
\end{split}
\end{equation}
and let
\begin{equation}\label{def-H}
H(r_1,r_2;\theta_1-\theta_2)=2^{-j\left(\frac{3}{2}-\delta\right)}\beta(r_1+r_2) (r_1+r_2)^{\frac{3}{2}-\delta} \int_0^\infty e^{i2^jr_1r_2 s^2} \psi_{3, m}(s)\;ds.
\end{equation}
We will show that
\begin{align}\label{kerDe3-est}
|K_{D,e}^{3,j}(r_1,r_2;\theta_1-\theta_2)|\lesssim& 2^{-j\left(\frac{3}{2}-\delta\right)}\big(1+2^jr_1r_2\big)^{-\frac12},\\\label{kerDm3-est}
\big|e^{-i2^j(r_1+r_2)}K_{D,m}^{3,j}(r_1,r_2;\theta_1-\theta_2)-H(r_1,r_2;\theta_1-\theta_2)\big|\lesssim& 2^{-j\left(\frac{3}{2}-\delta\right)}\big(1+2^jr_1r_2\big)^{-\frac12},\\\label{kerDH3-est}
|H(r_1,r_2;\theta_1-\theta_2)|\lesssim& 2^{-j\left(\frac{3}{2}-\delta\right)}\big(1+2^jr_1r_2\big)^{-\frac12}.
\end{align}
		
We first prove \eqref{kerDe3-est}. Arguing similarly as the case $\ell=1$, it suffices to show
\begin{equation}
\begin{split}
\int_0^1\big|\partial_s \psi_{3, e}(r_1,r_2,\theta_1,\theta_2; s)\big|ds\lesssim 1,
\end{split}
\end{equation}
uniformly in $r_1,r_2,\theta_1,\theta_2$, when $r_1+r_2\sim 1$. For our purpose, we have
\begin{align}\label{d-v2}
&\int_0^1\Big|\partial_s\Big[\Big(|{\bf n}_s|^{-3/2+\delta} a(2^j|{\bf n}_s|)\Big)\Big(\frac{\cosh(\alpha s)}{\cosh(s)-\cos(\theta_1-\theta_2+\pi)}-\frac{1}{\frac{s^2}2+b^2}\Big)\Big]\Big|\,ds\lesssim 1,
\end{align}
and
\begin{align}\label{d-v3}
\int_0^1\Big|\partial_s\Big[\Big(|{\bf n}_s|^{-3/2+\delta} a(2^j|{\bf n}_s|)-(r_1+r_2)^{-\frac12}a(2^j(r_1+r_2))\Big)\frac{1}{\frac{s^2}2+b^2}\Big]\Big|\, ds\lesssim& 1,
\end{align}
which are verified in appendix of \cite{MYZ}.
		
We next prove \eqref{kerDm3-est}. We will use the the Morse Lemma to write the phase function in term of quadratic formula via making variable change. Let 
$$\bar{\varphi}(s)=\frac{\big(r_1^2+r_2^2+2r_1r_2\cosh s\big)^{\frac12}}{r_1r_2}-\frac{r_1+r_2}{r_1r_2},$$
then $\bar{\varphi}(0)=\bar{\varphi}'(0)=0$ and $\bar{\varphi}''(0)=\frac{1}{r_1+r_2}\neq 0$.
Let 
$$g(s)=2\int_0^1(1-t)\bar{\varphi}''(ts)dt,$$
then we can write $\bar{\varphi}(s)=\frac12g(s)s^2$. Note that $g(0)=\bar{\varphi}''(0)\neq 0$, we make the variable changing $\tilde{s}=|g(s)|^{\frac12}s$.
Hence
\begin{equation}\label{d-ss}
\frac{d\tilde{s}}{ds}=(r_1+r_2)^{-\frac12}+O(r_1r_2s^2),\quad \frac{d{s}}{d\tilde{s}}=(r_1+r_2)^{\frac12}+O(r_1r_2\tilde{s}^2),
\end{equation}
and
$$\partial_{\tilde{s}}s\big|_{s=0}=|\bar{\varphi}''(0)|^{-\frac12}.$$
We write
$$\varphi(s)-(r_1+r_2)=r_1r_2\bar{\varphi}(s)=r_1r_2\tilde{s}^2,$$
by \eqref{d-ss}, hence
\begin{align}
&\int_0^\infty e^{i2^j(|{\bf n}_s|-(r_1+r_2))}\psi_{3, m}(s)\;ds\\\nonumber
=& \int_0^\infty e^{i2^jr_1r_2\tilde{s}^2}\big(\psi_{3, m}(\tilde{s})+O(r_1r_2\tilde{s}^2\big)\big((r_1+r_2)^{\frac12}+O(r_1r_2\tilde{s}^2)\big) d\tilde{s}.
\end{align}
By using Van der Corput Lemma as before, as desired, the difference is bounded by
$$\big|e^{-i2^j(r_1+r_2)}K_{D,m}^{3,j}(r_1,r_2;\theta_1-\theta_2)-H(r_1,r_2;\theta_1-\theta_2)\big|\lesssim 2^{-j\left(\frac{3}{2}-\delta\right)}\big(1+2^jr_1r_2\big)^{-\frac12}.$$
		
We finally prove \eqref{kerDH3-est}. Recall the definitions \eqref{def-H} and \eqref{psi-m}, by scaling, it suffices to show
\begin{equation}
\Big|\int_0^\infty e^{i2^{j+1}r_1r_2 s^2} \, \frac{b}{s^2+b^2}\;ds\Big|\lesssim \big(1+2^jr_1r_2\big)^{-\frac12}.
\end{equation}
This is the same to term $H$ in \cite[(35)]{BFM18} so that it follows from the same argument.
		
\end{proof}


\end{document}